\newtheorem{theorem}{Theorem}[section]
\newtheorem{proposition}{Proposition}
\newtheorem{definition}{Definition}
\newcommand{\myset}[1]{\mathcal{#1}}
\DeclareMathOperator{\Lagr}{\mathcal{L}} 
\newcommand{\PreserveBackslash}[1]{\let\temp=\\#1\let\\=\temp}
\newcolumntype{C}[1]{>{\PreserveBackslash\centering}m{#1}}
\newcolumntype{R}[1]{>{\PreserveBackslash\raggedleft}m{#1}}
\newcolumntype{L}[1]{>{\PreserveBackslash\raggedright}m{#1}}
\begin{document}

%\begin{frontmatter}

\title{Designing Day-Ahead Multi-carrier Markets for Flexibility: Models and Clearing Algorithms\footnote{The work presented in this paper was carried out within the MAGNITUDE project which received funding from the European Union's Horizon 2020 research and innovation programme under grant agreement No 774309. This paper and the results described reflect only the authors' view. The European Commission and the Innovation and Networks Executive Agency (INEA) are not responsible for any use that may be made of the information they contain. \protect\\  $^\dagger$ Authors contributed equally. \protect\\  Email addresses: shahab.shariattorbaghan@wur.nl (Shahab Shariat Torbaghan), mma@n-side.com (Mehdi Madani), sels.peter@gmail.com (Peter Sels), ana.virag@vito.be (Ana Virag), helene.lecadre@vito.be(Hélène Le Cadre), kris.kessels@vito.be (Kris Kessels), yuting.mou@vito.be (Yuting Mou).}}
\date{December 2020}
%\tnoteref{mytitlenote}
%\tnotetext[mytitlenote]{}

\author[1,3,4]{Shahab~Shariat~Torbaghan $^\dagger$} %\thanks{\protect\label{note1} }
%\corref{mycorrespondingauthor}}
% \cortext[mycorrespondingauthor]{Corresponding author}
%
%\ead{shahab.shariattorbaghan@wur.nl}

\author[2]{Mehdi~Madani $^\dagger$ } %\textsuperscript{\protect\footnotemark[\ref{note1}]}
%\ead{mma@n-side.com}

\author[2]{Peter~Sels}
%\ead{pse@n-side.com}

\author[1,3]{Ana~Virag}
%\ead{ana.virag@vito.be}

\author[1,3]{Hélène~Le~Cadre}
%\ead{helene.lecadre@vito.be}

\author[1,3]{Kris~Kessels}
%ead{kris.kessels@vito.be}

\author[1,3]{Yuting~Mou}
%ead{yuting.mou@vito.be}

%\cortext[cor1]{}
\affil[1]{VITO, Boeretang 200, 2400 Mol, Belgium}
\affil[3]{EnergyVille, ThorPark 8310, 3600 Genk, Belgium}
\affil[2]{N-SIDE, Boulevard Baudouin 1er, 25, 1348 Louvain-La-Neuve, Belgium}
\affil[4]{ETE, Wageningen University and Research, Bornse Weilanden 9, 6700 AA Wageningen, The Netherlands}

\maketitle
\begin{abstract}
There is an intrinsic value in higher integration of multi-carrier energy systems (especially gas and electricity), to increase operational flexibility in the electricity system and to improve allocation of resources in gas and electricity networks.  The integration of different energy carrier markets is challenging due to the existence of physical and economic dependencies between the different energy carriers.  We propose in this paper an integrated day-ahead multi-carrier gas,  electricity and heat market clearing which includes new types of orders and constraints on these orders to represent techno-economic constraints of con-version and storage technologies.  We prove that the proposed market clearing gives rise to competitive equilibria. In addition, we propose two decentralised clearing algorithms which differ in how the decomposition of the underlying centralised clearing optimisation problem is performed.  This has  implications  in  terms  of  the  involved  agents  and  their  mutual  information  exchange.   It  is proven  that  they  yield  solutions  equivalent  to  the  centralised  market  clearing  under  a  mild  assumption of sufficient number of iterations.  We argue that such an integrated multi-carrier energy market mitigates (spot) market risks faced by market participants and enables better spot pricing of the different energy carriers.  The results show that conversion/storage technology owners would suffer from losses and/or opportunity costs, if they were obliged to only use elementary orders. For the test cases considered in this article, sum of losses and opportunity costs could reach up to 13,000 \euro{}/day and 9,000 \euro{}/day respectively, compared with the case where conversion and storage orders are used.
\end{abstract}

\vspace{-0.25cm}
\textbf{Keywords:} Multi-carrier energy markets, order types, decomposition, forecast uncertainty, social welfare optimisation.

%\end{frontmatter}

\mbox{}

% \newpage

\vspace{-1cm}

\section*{Notation and Abbreviations}

\noindent

Sets:
\begin{description}[style=multiline,leftmargin=1.5cm]
	\item[$\mathcal{C}$] Set of energy carriers, including electricity $e$, gas $g$ and heat $h$. 
	\item[$\mathcal{CM}$] Set of   cumulative constraints.
	\item[$\mathcal{CO}$] Set of conversion orders. 
	\item[$\mathcal{O}$] Set of orders. 
	\item[$\mathcal{O}_t^c$] Set of orders for carrier $c$ at time step $t$. 
	\item[$\mathcal{PR}$] Set of  pro-rata constraints.
	\item[$\mathcal{T}$] Set of market clearing time steps.
	\item[$\mathcal{TS}$] Set of storage orders constraints. 
\end{description}
\bigskip

Variables:

\begin{description}[style=multiline,leftmargin=1.5cm]
	\item[$e_{s,t}$] Level of energy for storage order $s$ at $t$.
	\item[$x_{o}$] Acceptance ratio $\in [0,1]$ of order $o$.
	\item[$x_{o}^{c1\rightarrow c2}$] Acceptance ratio of the conversion order $o$, converting carrier $c1 \in \mathcal{C}$ to carrier $c2 \in \mathcal{C}$.
	\item[$x^{in}_{s,t}$] For storage order $s$, the fraction of quantity $Q^{in}_{s,t}$ injected in the market at $t$ .
	\item[$x^{out}_{s,t}$] For storage order $s$, the fraction of quantity $Q^{out}_{s,t}$ extracted from the market at $t$.
	\item[$\pi^{c}_{t}$] Market clearing price for carrier $c$ at period $t$.   Dual variable of power balance constraints for carrier $c$ at period $t$.
\end{description}
\bigskip
\noindent

Parameters:

\begin{description}[style=multiline,leftmargin=2cm]
	\item[$E_{s}^{\max}$] Energy storage maximum capacity.	
	\item[$P_{o}$] Price [\euro /MWh] of order $o$.
	\item[$P_{co}^{c_1\rightarrow c_2}$] The total conversion cost associated with conversion order $co$ when converting carrier $c_1$ to $c_2$.
    \item[$P_{s}^{spread}$] Spread or cost to recover by the buy and sell operations linked to storage order $s$.
	\item[$Q_{o}$]  Quantity [MWh] of order $o$. $Q^{c}_{o,t} < 0$ for sell orders and $Q^{c}_{o,t} > 0$ for buy orders.
	\item[$Q_{co}^{c_1\rightarrow c_2}$]The total conversion capacity associated with conversion order $co$ when converting carrier $c_1$ to $c_2$.
	\item[$Q_{s,t}^{out}, Q_{s,t}^{in}$] Quantities [MWh] for the storage order $s$ respectively denoting the total amount of energy that can be bought  at period $t$  or sold back  at period $t$.
	\item[$\eta_{co}^{c_1\rightarrow c_2}$] The conversion efficiency associated with conversion order $co$ when converting carrier $c_1$ to $c_2$. Efficiency must be $ > 0$ but is allowed to be $>1$ e.g. to model heat pumps.
	\item[$\eta_{s}^{out}, \eta_{s}^{in}$] Efficiencies in [0;1] for the storage order $s$ respectively denoting the efficiency  when energy is extracted from the market and the efficiency when energy is injected back in the market.
\end{description}

\bigskip
\noindent
Abbreviations of market models and clearing algorithms:

\begin{description}[style=multiline,leftmargin=2.5cm]
	\item[\normalfont MC0]  Market clearing of decoupled energy-carrier markets, described by \eqref{model:basemodel_obj}.
	\item[\normalfont MC1-FULL] Market clearing of integrated energy-carrier markets featuring conversion orders (see \eqref{model:conversion_bids_model_obj_gen}-\eqref{model:conversion_bids_model_end}) , storage orders (see \eqref{model:storage_primal_1_313}-\eqref{model:storage_primal_end_313} ) and constraints (see \eqref{model:CC_pr_primal_obj}-\eqref{model:CC_pr_primal_end}, \eqref{model:conversion_bids_model_g-to-p-h_obj-bis}-\eqref{model:conversion_bids_model_g-to-p-h_eq_end-bis}).
	\item[\normalfont MC1] Centralised market clearing algorithm, directly optimising formulation \eqref{MD53:conversion_bids_model_obj_gen} - \eqref{model523:conversion_bids_model_eq_end} (MC1-FULL with only conversion orders to simplify the presentation of decentralised algorithms leading to MC2 and MC3)
	\item[\normalfont MC2] Decentralised market clearing algorithm based on Lagragian decomposition (auxiliary variables with equality constraints), see the Lagragian relaxation and Lagragian dual \eqref{Lagrangian-relax-1}, \eqref{Lagrangian-dual-1}.
	\item[\normalfont MC3] Decentralised market clearing algorithm based on Lagrangian relaxation of the balance conditions, see the Lagragian relaxation and Lagragian dual \eqref{Lagrangian-relax-2}, \eqref{Lagrangian-dual-2}.
	\item[\normalfont MES] \color{black} Multi-energy systems \color{black}
\end{description}

\printnomenclature

\section{Introduction}
\subsection{Motivation and Background}
Climate change and the subsequent environmental and societal impacts \textcolor{black}{such as sea level rise, extreme weather events (e.g., severe storms and heat waves) \cite{fritze2008hope}} have led scientific and regulatory communities to seek solutions \textcolor{black}{for cutting greenhouse gas emissions to net-zero  \cite{cesena2019flexibility}, \cite{pinson2017towards}}. 
Sector coupling, i.e. integrated planning and operation of multi-energy systems (MES) in which the techno-economic interactions between different energy carriers and the associated impacts are explicitly accounted for, is seen as a promising answer to cutting greenhouse gas emissions \cite{EUGreenDeal2019}.

The multiplicity of energy sources and their growing interactions on the wholesale level, in industrial applications, and at the retail level, prove that the design of well-integrated systems and markets is heavily needed in the future. More examples are given by the joint gas and electricity network development plans, arise of local energy communities, e.g. in physical form of local heat networks coupled with electricity systems, or the large scale integration of conversion technologies such as heat pumps. In that context, the present paper aims at providing both, a market framework for a perfect integration of day-ahead multi-carrier energy markets and also a framework to conduct quantitative analyses on sector coupling at the European level.

Operational and planning aspects of technical integration of multi-energy systems already received some attention from the scientific community \cite{mancarella2014mes, cho2014combined}. \textcolor{black}{Authors in \cite{geidl2007,geidl2007optimal}  introduce the energy hub concept. It is a generic framework that determines a conversion matrix to describe the input/output relationship between the energy production, consumption and transport in multi-energy setting. 
Using the energy-hub concept, Krause et al. \cite{krause2011multiple} propose an extended modelling framework that can be used to study the operational (e.g., optimal power flow, risk management) and planning (e.g., investment) studies.} 

Sector coupling has already shown to enable utilisation of cross-sectoral flexibility \cite{yazdani2018strategic, clegg2015integrated, ordoudis2019integrated, 7533455, Zhou2017equivalent, shahidehpour2005impact, biskas2016coupled, zlotnik2016coordinated, zhao2009security, doi:10.1063/1.3600761, Application2010Shahidehpour}, and hence can lead to higher integration of renewable energy sources or even decrease of network costs.
\textcolor{black}{Authors in \cite{clegg2015integrated} develop an integrated framework to study the potential impact of power to gas (P2G) on the operation of electricity and gas transmission networks considering the interdependence's that will result from large-scale integration of P2G between the gas and electricity systems. The study demonstrates the techno-economic implications of P2G programs for \textcolor{black}{both electricity and gas networks} including enabling reducing wind curtailment in the electricity network by storing \textcolor{black}{hydrogen} in the gas network, with almost no disruption to the operation of the gas network. Likewise \cite{Anubhav2020, ordoudis2019integrated} use stochastic optimisation framework to study the operation of an integrated electricity and gas system with line-pack. The study shows that the flexibility inherited in the gas network can be used to resolve network congestion in the electricity network and from there, facilitates the integration of renewable energy resources in the network.}

\textcolor{black}{Chen et al. in \cite{7533455} introduced an integrated gas and electricity model that considers the correlation between electricity supply from renewables (e.g., wind) and demand through conversion technologies (e.g., power-to-gas units) and their impact on the coordinated operation of gas and power systems. Reference  \cite{Zhou2017equivalent} investigate the effects of variations in gas pipeline pressure on the ramp rates of NGFPPs. References \cite{shahidehpour2005impact, biskas2016coupled} study the impacts of natural gas and power system network on power generation scheduling and vice versa. The two systems are shown to be strongly interdependent and therefore, greatly benefit if operated in full coordination. Considering different system stress scenarios, \cite{zlotnik2016coordinated} investigates the economic and technical impact of implementing different coordination levels between the gas and electricity networks. The authors show that an increased coordination provides economic efficiency and improves security of supply, especially under high stress operational conditions. Neglecting the gas flow physics, \cite{zhao2009security}  proposed a two-stage stochastic optimisation framework to investigate the unit commitment problem considering natural gas supply uncertainties. Following this work, Liu et al.  proposed a bi-level optimisation model that seeks to minimise the electricity production costs subject to natural gas feasibility constraints in \cite{doi:10.1063/1.3600761} and investigated coordinated scheduling of natural gas and power systems from the perspective of a joint operator in \cite{Application2010Shahidehpour}. }

Much less attention was paid to studying aspects of economic integration of multi-energy systems, for instance, (day-ahead) market organisation\footnote{Throughout this paper we will focus on day-ahead time frame for market-based sector coupling.} needed to ensure the economically optimal sector coupling. A market-based, economic integration of multi-energy systems is crucial to operate the multi-energy systems in a liberalised context and deliver on a promise of sector coupling. Besides a few exemptions that consider electricity markets, gas markets, and heating/cooling\footnote{For  the  sake  of  brevity,  from now on, we  only  refer  to  heat  systems/markets in the paper.} (regulated) markets, \cite{van2018benefits, Brolin2020, zhong2018auction, xie2016optimal}, the majority of literature considers coupling of electricity and gas markets, see e.g., \cite{zhao2018shadow, chen2019unit, CUI2016183, 7913721, chen2017clearing, wang2018equilibrium}. 
\textcolor{black}{Zhao et al. \cite{zhao2018shadow} studied the day-ahead gas and electricity market model in which the two markets are operated by different entities. The proposed model considers physical properties of both systems and accurately reflects the specifications of gas-flow transit.
A key feature of the model is that it requires only limited information exchange between the gas and electricity markets (namely, fuel prices, supply and demand information) while keeping the network data and customer information for each system confidential. Reference~\cite{chen2019unit} introduced a sequential, bilateral, multi-time period electricity and gas market platform that models trades in the distribution networks. The second order convex relaxation technique is used to reduce computational costs of the nonlinear and non-convex electricity and gas equations. The author proposed a decomposition technique to \textcolor{black}{compute} the equilibrium. Market trades are shown to settle at the marginal prices which are determined at the equilibrium. 
Cui et al. \cite{CUI2016183} proposed an optimisation based scheduling framework to model coordinated operation of electricity and gas networks in a market-based environment. The scheduling framework is then used to study the bidding strategy of a virtual power plant \textcolor{black}{(VPP)} that seeks to maximise its profit by governing flexible load and coupon based demand response. The paper shows that several factors derives the VPP's effectiveness depends on several factors \textcolor{black}{first part of the sentence unclear to me} including the size of the system under study, the bidding strategy of the VPP and the underlying technology types.
%Then, 
\textcolor{black}{Introducing hierarchy in the decision process,} Wang et al. in \cite{7913721} study strategic behaviour of 
%strategic 
\textcolor{black}{market} participants in a synchronised integrated gas and electricity market that is operated by a centralised market operator. The market allows for bi-directional gas and electricity trading. The market clearing problem is formulated as an equilibrium problem with equilibrium constraints (EPEC) which is challenging to solve \cite{gabriel2012complementarity}. The synchronised mechanism is shown to be more efficient compared to the sequential clearing mechanism that is in practice to date. Chen et al. \cite{chen2017clearing} introduced a novel non-deterministic, coordinated gas and electricity market that results in more efficient allocation of resources, while considering the short-run uncertainty associated with renewable energies and imperfect price forecasts. The two systems are shown to be costly to schedule in real-time mainly due to long response time of gas to electricity technologies. To solve this problem, the authors introduce a new market product namely, reserved gas supply capacity which in essence is equivalent to curtailing low priority loads when the gas network capacity is needed for the power system. Jiang et al. \cite{wang2018equilibrium} propose a multi-period programming to study ``marginal price based bilateral energy trading on the equilibrium of coupled natural gas and electricity distribution markets". }

Some literature focuses on interactions between local heat and global electricity markets, see e.g. \cite{mitridati2016optimal, deng2019generalized}. \textcolor{black}{Reference \cite{mitridati2016optimal} assumes a local heat market and considers CHP units that participate in both electricity and heat markets. The authors model the economic dispatch problem of CHPs as a stochastic optimisation framework. Deng et al \cite{deng2019generalized} formulated the heat and electricity integrated market as an optimisation problem that seeks to maximise the social welfare \textcolor{black}{obtained as the sum} of the individual markets operators' utility functions. Using duality analysis, the authors derive a new pricing method, called generalised locational marginal pricing (GLMP) that accounts for coupling between the markets as well as the time-delay effects in the heating transfer process.}

% \textcolor{black}{}

A market-based integration of different energy carriers is challenging due to the existence of technical (e.g. temporal) and economic dependencies between the underlying physical systems of different carriers \cite{Brolin2020}, \cite{Kessels2019Innovative}. A key missing component here is a market-based coordination mechanism that is specifically designed to incorporate these dependencies,  \textcolor{black}{and as such can help integrating renewable energy sources and more efficient energy use, thus meeting the climate goals. Ideally, the market-based coordination of multi-energy systems on the one hand creates added value for the system as a whole, e.g. in form of increased social welfare or increased system capability to integrate renewables. On the other hand, ideally, such a market offers tools to market participants to communicate their own techno-economic constraints and offers them tools for risk mitigation due to uncertain market price realisations. 
Defining and} implementing such a  market-based mechanism and underlying orders and constraints  is a complex endeavour that requires diligent considerations \cite{Brolin2020, clegg2019integrated, kriechbaum2017smartexergy, lecadre2019}.

The above cited literature (\cite{van2018benefits} - \cite{deng2019generalized}) investigates the linkages and the associated impacts that exist between the energy systems of heat, natural gas and electricity (power) and \textcolor{black}{to a certain extent} study different levels of \textcolor{black}{market-based} coordination between the associated carriers. Some  \textcolor{black}{authors take perspective of market participants} and study the optimal participation strategy of certain technologies in such an integrated market environment, see e.g. \cite{CUI2016183, 7913721}. However, there is still need for further research in the field. 

For one, little attention is given to multi-carrier market integration and its potential impacts  on the market participant. Take conversion technologies such as gas-fired power plants, as an example. The economic success of such units strongly depends on the forecast accuracy of market prices in the source and destination carrier markets or more generally on the design of efficient bidding strategies facing price uncertainty on different carrier markets. One can see that simply integrating the market of different carriers would expose such technologies to substantial risks due to the biases introduced by the forecasts. 

As another research gap, very little has been done on different market designs and clearing algorithms in a fully integrated multi-carrier energy market design, and subsequently, the plausible implications thereof on 
the operation of such markets in practice and price formation aspects. %Ana: Is this true?

\textcolor{black}{Finally, let us note that today, heat, gas and electricity markets are fully decoupled and managed by different entities. For example, orders submitted to electricity markets are not linked in any manner to orders submitted in the gas markets. This paper proposes market models where electricity, heat and gas markets are co-optimised and where the co-optimisation leverages new bidding products such as conversion orders and associated constraints that the market participants can introduce to better represent their techno-economic constraints. The market models introduced aim at removing economic inefficiencies due to the lack of market-based coordination.}

\subsection{Contributions and Organisation}

\textcolor{black}{In this paper, we provide answers to the following questions, which to the best of our knowledge have not been yet addressed in the literature: \textbf{(i)} what are the market order types and constraints that allow for economically efficient sector coupling as defined above\footnote{A market which results in an increased level of sector coupling, and at the same time offer an instrument to the market participants to reduce exposure to risks caused by imperfect price forecasts on different markets}? \textbf{(ii)} How should a market clearing mechanism with such order types and constraints look like and what are the economical and practical implications of calculating the market outcome in a centralised or decentralised manner?} 

\textcolor{black}{To this end, we provide mathematical formulation and analysis of multiple new integrated multi-carrier gas, electricity and heat market clearing models. The market clearing models include new market order types and constraints allowing market participants to describe their technical constraints and cost structures in a context of integrated multi-carrier markets}\textcolor{black}{, allowing to couple these markets in an efficient and economically coherent way. The techniques used consist in linear programming models appropriately using variables and constraints to represent the orders linking the different carrier markets, and leveraging linear programming duality to show that market outcomes form competitive equilibria ensuring that all conversion (resp. storage) technology owners are satisfied. Main ingredients are developed in Section 2.} 
Orders include characteristics (features) such as quantity, price, location, conversion costs and efficiencies, etc. Constraints are defined as a set of statements that are used to reflect various dependencies that exist amongst orders of different carriers, or orders of different time periods. Orders and constraints as such are defined to enable market participants to reflect their technological constraints (e.g., ramp rates, etc.) and private economic valuations (e.g., bid reserve prices) in the clearing process and from there, limit their exposure to the risk associated with energy trading. 
The main contributions of this paper lies firstly in the introduction and modelling of novel advanced (namely conversion and storage) orders, secondly in the mathematical formulation of the centralised and decentralised multi-carrier day-ahead market \textcolor{black}{clearing with the introduced advanced} orders, and thirdly, in providing  economic analyses and interpretations of the proposed market outcomes. 

The remainder of this paper is organised as follows. Section \ref{section:ordertypes} introduces the mathematical formulation of novel advanced orders outlined above. Section \ref{section:marketdesigns} develops three market clearings based on (subsets of) the advanced orders introduced in Section \ref{section:ordertypes}. A set of numerical results is presented in Section~\ref{section:results}. Section~\ref{section:conclusion} concludes the paper.

\section{\textcolor{black}{Novel Advanced} Orders and Multi-Carrier Energy Market \textcolor{black}{Clearing}} \label{section:ordertypes}
We introduce here the mathematical formulation of \textcolor{black}{two novel} advanced orders\textcolor{black}{: conversion and storage orders. These orders are additions to the market to increase participation of conversion, flexible and less controllable technologies in the market, and so improve the technology neutrality. They }enable market participants to trade energy across the different carrier markets \textcolor{black}{or across different time steps} while taking into account their technical constraints and/or cost structures. \textcolor{black}{The novel advanced orders allow to the market participants to externalise a part of their risk related to the price forecast errors as will be discussed in detail in this section and illustrated by the case studies. The new order types offer a mean for risk mitigation related to the price forecasts to the market participants under perfect competition. }

\textcolor{black}{Next to the novel order types, two new constraints, namely pro-rata and cumulative constraint, are introduced in this section. Constraints can be used to impose an additional relation between two or more orders. They give market participants more flexibility in translating their techno-economic characteristics into offers to the market, and hence make market more accessible to a broader range of technologies. }

\subsection{Market Clearing with Elementary, Conversion and Storage Orders}\label{section:elementary_orders}

We \textcolor{black}{first briefly describe the }classic elementary orders, \textcolor{black}{and then introduce the novel} conversion and storage orders. \textcolor{black}{Conversion orders allow for buying/selling energy in one energy carrier depending on the realized market prices of another energy carrier at the same time instance. Storage orders apply to a single carrier and allow market participants to trade energy across periods of the market clearing horizon, depending on the energy price differences  at different time instances.}

\subsubsection{Elementary Orders}
An elementary order $o$ is the simplest type of order similar to the most basic order type encountered in practice. It specifies the \textit{quantity ($Q^{c}_o$)} (amount of energy \textcolor{black}{in carrier $c$} in MWh) and a \textit{limit price ($P^{c}_o$)} in \euro{}/MWh \textcolor{black}{for the same carrier $c$}. A supplier indicates via this limit price what is the minimum price that he is willing to accept to provide energy, and a consumer specifies via this limit price what is the maximum price he is willing to pay to consume energy. In competitive markets, these limit prices should correspond respectively to the marginal cost of the seller and marginal utility of the buyer.

The corresponding welfare maximisation problem \textcolor{black}{with exclusively elementary orders} is \textcolor{black}{defined} as follows.

\textcolor{black}{For brevity, }we adopt the following convention:  $Q^{c}_{o} < 0$ for sell orders, and $Q^{c}_{o} > 0$ for buy orders.

The decoupled \textcolor{black}{ energy carrier} market \textcolor{black}{clearing problem} pursues maximisation of the aggregated social welfare over every carrier market $c \in \mathcal{C}$  and takes on the following form for each carrier market $c \in \mathcal{C}$:

\begin{subequations}
\begin{align}
     \max_{x_{o}^c}& \sum_{o \in \mathcal{O}^{c}} P^{c}_{o} Q^{c}_{o} x^{c}_{o}, \label{model:basemodel_obj_g}\\
\text{s.t.}     
& \sum_{o \in \mathcal{O}^{c}} Q^{c}_{o} x^{c}_{o} = 0, & [\pi^{c}] \label{model:basemodel_eq1_g} \\
& x^{c}_{o} \leq 1, \ o \in \mathcal{O}^{c},  & [\sigma_o^{c}] \\ %\notag
 &x^{c}_{o} \geq 0, \ o \in \mathcal{O}^{c},  \label{model:basemodel_eq_end_g} %\notag
\end{align}
\end{subequations}

\textcolor{black}{where a variable $x_o^c  \in [0,1]$ is the optimal fraction of the total offered order quantity $Q^{c}_{o}$ that is accepted in the market. }

Conditions (\ref{model:basemodel_eq1_g}) are the \textcolor{black}{energy} balance constraints for \textcolor{black}{each of} the carriers. The optimal values of the associated dual variables $\pi^c$ provide the market price for each carrier. Let us note that these optimisation problems can be aggregated into one global equivalent welfare maximisation problem:

\begin{equation}
\label{model:basemodel_obj}
    \max_{x^{c}_{o}} \Big\{\sum_{c \in \mathcal{C}} \sum_{o \in \mathcal{O}^{c}} P^{c}_{o} Q^{c}_{o} x^{c}_{o}, \quad \text{ s.t. } \quad \text{(\ref{model:basemodel_eq1_g})-(\ref{model:basemodel_eq_end_g})}\ % \forall c \in \mathcal{C} 
    \Big\}.    
\end{equation}
Indeed, this global welfare maximisation problem is easily separable into the individual problems per carrier \eqref{model:basemodel_obj_g}-\eqref{model:basemodel_eq_end_g} (written separately per energy carrier $c$) as there are no shared variables \eqref{model:basemodel_eq1_g} \textcolor{black}{or \textcolor{black}{coupling constraints, among carriers}}.

\color{black}
\subsubsection{Conversion orders}
In this Section, we focus on the \textcolor{black}{novel conversion order type, which creates a tool for market participants to mitigate the risks related to the price forecasts of different carriers, and a mean to couple the markets of different carriers.  The  conversion orders are first introduced on an example of a natural gas-fired power plant that participates in gas and electricity markets, and then generalised to any conversion technology.} 

A conversion technology owner, \textcolor{black}{such as owner of a natural gas-fired power plant,} essentially buys from one or multiple energy carrier markets and converts energy to be sold in other energy carrier markets. 
\textcolor{black}{The following model describes conversion orders submitted by a market participant able to convert a total capacity of $Q_{co}^{g\rightarrow e} >0$ [MWh] gas to electricity at a certain conversion price $P_{co}^{g\rightarrow e}$ [\euro{}/MWh], and with a physical efficiency given by $\eta_{co}^{g\rightarrow e}$, where $\eta_{co}^{g\rightarrow e} \in [0,1]$. In the notation above, $co$ is the index of the considered conversion order.} 
\textcolor{black}{The proposed conversion orders have to be included in the market clearing as a part of objective function and part of constraints. }
\textcolor{black}{The market clearing optimisation problem with the conversion order is:}
\begin{subequations}

\begin{align}
    \max_{x}&
     \sum_{o \in \mathcal{O}^{g}} P^{g}_{o} Q^{g}_{o} x^{g}_{o} + \sum_{o \in \mathcal{O}^{e}} P^{e}_{o} Q^{e}_{o} x^{e}_{o} + \sum_{o \in \mathcal{O}^{h}} P^{h}_{o} Q^{h}_{o} x^{h}_{o} - P_{co}^{g\rightarrow e}Q_{co}^{g\rightarrow e} x_{co}^{g\rightarrow e},
    \label{model:conversion_bids_model_obj}\\
\text{s.t.}    & \sum_{o \in \mathcal{O}^{g}} Q^{g}_{o} x^{g}_{o} +   Q_{co}^{g\rightarrow e}x_{co,t}^{g\rightarrow e} = 0, & [\pi^g] %\qquad \qquad \quad
    \label{model:conversion_bids_model_eq1} \\
    & \sum_{o \in \mathcal{O}^{e}} Q^{e}_{o} x^{e}_{o}  
    -   \eta_{co}^{g\rightarrow e}Q_{co}^{g\rightarrow e} x_{co}^{g\rightarrow e} = 0,  & [\pi^e] % \qquad \qquad \quad
    \label{model:conversion_bids_model_eq2}    \\
    & \sum_{{o \in \mathcal{O}^{h}}} Q^{h}_{o} x^{h}_{o} = 0, & [\pi^h]% \qquad \qquad \quad
    \label{model:conversion_bids_model_eq3}\\
    & x_{o}^{c} \leq 1,  o \in \mathcal{O}^{c}, c \in \mathcal{C}, & [\sigma_{o}^{c}]   \label{model:conversion_bids_model_eq4}\\
    % \label{model:conversion_bids_model_eq_convcap} 
    & x_{o}^{g}, x_{o}^{e}, x_{o}^{h}, x_{co}^{g\rightarrow e} \geq 0.\label{model:conversion_bids_model_ge_eq_end}
\end{align}
\end{subequations}

\textcolor{black}{Condition \eqref{model:conversion_bids_model_eq1}-\eqref{model:conversion_bids_model_ge_eq_end} are respectively the gas, electricity and heat energy balance constraints. The term $Q_{co}^{g\rightarrow e} x_{co}^{g\rightarrow e}$ corresponds to the gas bought by the conversion technology, while the term $-   \eta_{co}^{g\rightarrow e}Q_{co}^{g\rightarrow e} x_{co}^{g\rightarrow e}$ in the electricity balance condition \eqref{model:conversion_bids_model_eq2} is the corresponding supplied amount of electricity taking into account conversion losses via the parameter $\eta_{co}^{g\rightarrow e}$. \footnote{The signs of these extra terms are explained by the fact that the convention for $Q$ is that demand is a positive number and so, a demand for conversion from gas to electricity is also a positive value for gas.}}

\textcolor{black}{Note that the optimisation problem defined by \eqref{model:conversion_bids_model_obj}-\eqref{model:conversion_bids_model_ge_eq_end} is not fully separable any longer due to existence of coupling variables $x_{co}^{g\rightarrow e}$, which appear in the objective and constraints, and are needed to handle the proposed conversion orders. The conversion orders are therefore only possible in the integrated multi-carrier markets.}

In the general case, a conversion order $co \in \myset{CO}$ specifies a carrier of origin $c_1$, a carrier of destination $c_2$ and the associated parameters $P_{co}^{c_1\rightarrow c_2}$ [\euro{}/MWh],  $Q_{co}^{c_1\rightarrow c_2}$ [MWh], and $\eta_{co}^{c_1\rightarrow c_2}$ [-], respectively the conversion price, the total conversion capacity, and the conversion efficiency.  The formulation of the integrated (single-period) market model with conversion orders is:

\begin{subequations}
\begin{align}
    \max_{x_{o}^{c}, x_{co}^{g\rightarrow e}} &
    \sum_{c \in \mathcal{C}} \sum_{o \in \mathcal{O}^{c}} P^{c}_{o} Q^{c}_{o} x^{c}_{o} - \sum_{ 
    \substack{co \in \mathcal{O}^{c_1\rightarrow c_2} \\ c_1, c_2 \in \myset{C}}} P_{co}^{c_1\rightarrow c_2}Q_{co}^{c_1\rightarrow c_2}x_{co}^{c_1\rightarrow c_2},  \label{model:conversion_bids_model_obj_gen} \\
 \text{s.t.}   &\sum_{o \in \mathcal{O}^{c_1}} Q^{c_1}_{o} x^{c_1}_{o} +  
    \sum_{  \substack{co \in \mathcal{O}^{c_1\rightarrow c_2} \\ c_2 \neq c_1}} Q_{co}^{c_1\rightarrow c_2} x_{co}^{c_1\rightarrow c_2}  \\
- & \sum_{ \substack{co \in \mathcal{O}^{c_2 \rightarrow c_1}\\c_2 \neq c_1}} \eta_{co}^{c_2\rightarrow c_1}  Q_{co}^{c_2\rightarrow c_1} x_{co}^{c_2\rightarrow c_1}  = 0, c_1  \in  \myset{C}, &  [\pi^{c_1}] \label{model:conversion_bids_balance_condition} \\
    &x_{co}^{c_1 \rightarrow c_2} \leq 1,  co \in \myset{O}^{c_1 \rightarrow c_2}, c_1,c_2 \in \myset{C}, &[\sigma_{co,t}^{c_1 \rightarrow c_2}]  \\ 
    &x^{c}_{o} \leq 1, o \in \myset{O}_{}^{c}, c \in \myset{C},& [\sigma_{o}^{c}]\\
    &x_{o}, x_{co}^{c_1 \rightarrow c_2} \geq 0.\label{model:conversion_bids_model_end}
\end{align}
\end{subequations}

\textcolor{black}{Let us note that in a multi-period market setting, any of these conversion orders and all associated parameters can be indexed by the time step $t$ at which the conversion order should be considered. A location can be specified with locations connected through a transmission network. They are omitted here to simplify notation.}

We now \textcolor{black}{show that} the market outcome of the integrated multi-carrier markets with conversion orders results in economically efficient (e.g., perfectly optimal) outcomes for market participants offering their conversion technologies' \textcolor{black}{capacity by means of conversion orders}.

\begin{proposition}[Special case of Theorem \ref{theorem:competitive-equilibrium} in appendix.]\textbf{Competitive equilibrium in a market clearing with conversion orders}
\label{theorem:conversion_bids_equlibrium_property}\\
Let us consider market prices $\pi^c$ obtained as optimal dual variables  of the \textcolor{black}{energy} balance constraints \eqref{model:conversion_bids_balance_condition} written for each carrier $c\in C$ \textcolor{black}{of the optimisation problem \eqref{model:conversion_bids_model_obj_gen}-\eqref{model:conversion_bids_model_end}. Then, the following statements hold:}

If $(\eta_{co}^{c_1\rightarrow c_2} \pi^{c_2}  - \pi^{c_1} - P_c^{c_1\rightarrow c_2}) > 0$, i.e. if the conversion \textcolor{black}{order} is profitable, then $x_{co}^{c_1\rightarrow c_2} = 1$, that is the \textcolor{black}{conversion} order is fully accepted.

If $(\eta_{co}^{c_1\rightarrow c_2} \pi^{c_2}  - \pi^{c_1} - P_{co}^{c_1\rightarrow c_2}) < 0$, i.e. running the conversion \textcolor{black}{order} would incur losses, then $x_{co}^{c_1\rightarrow c_2} = 0$, that is the \textcolor{black}{conversion} order is fully rejected.

If $(\eta_{co}^{c_1\rightarrow c_2} \pi^{c_2}  - \pi^{c_1} - P_{co}^{c_1\rightarrow c_2}) = 0$, i.e. the conversion \textcolor{black}{order} is making zero profit or losses, then $x_{co}^{c_1\rightarrow c_2} \in [0;1]$, that is the \textcolor{black}{conversion} order can be fully accepted, partially accepted, or rejected. 
\end{proposition}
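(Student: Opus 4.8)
The plan is to treat the market clearing problem \eqref{model:conversion_bids_model_obj_gen}--\eqref{model:conversion_bids_model_end} as a linear program and to read off the optimality conditions attached to the single variable $x_{co}^{c_1\rightarrow c_2}$. Since this is an LP with a nonempty, bounded feasible region (every acceptance ratio lies in $[0,1]$), strong duality holds and the KKT conditions are necessary and sufficient for optimality; hence it suffices to exploit stationarity together with complementary slackness at the optimal primal-dual pair that produces the prices $\pi^c$. This is exactly the specialisation of the appendix result, so I would either invoke Theorem \ref{theorem:competitive-equilibrium} directly or, to keep the argument self-contained, carry out the following short derivation.

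First I would form the Lagrangian, attaching the prices $\pi^{c}$ to the balance constraints \eqref{model:conversion_bids_balance_condition}, multipliers $\sigma_{co}^{c_1\rightarrow c_2}\ge 0$ to the upper bounds $x_{co}^{c_1\rightarrow c_2}\le 1$, and multipliers $\mu_{co}^{c_1\rightarrow c_2}\ge 0$ to the nonnegativity constraints $x_{co}^{c_1\rightarrow c_2}\ge 0$. The crucial bookkeeping step is to locate exactly where $x_{co}^{c_1\rightarrow c_2}$ occurs: it contributes $-P_{co}^{c_1\rightarrow c_2}Q_{co}^{c_1\rightarrow c_2}$ to the objective, enters the balance of the origin carrier $c_1$ with coefficient $+Q_{co}^{c_1\rightarrow c_2}$, enters the balance of the destination carrier $c_2$ with coefficient $-\eta_{co}^{c_1\rightarrow c_2}Q_{co}^{c_1\rightarrow c_2}$, and appears nowhere else.

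Differentiating the Lagrangian with respect to $x_{co}^{c_1\rightarrow c_2}$, setting the derivative to zero, and factoring out $Q_{co}^{c_1\rightarrow c_2}>0$, I would obtain the stationarity relation
\begin{equation*}
Q_{co}^{c_1\rightarrow c_2}\bigl(\eta_{co}^{c_1\rightarrow c_2}\pi^{c_2}-\pi^{c_1}-P_{co}^{c_1\rightarrow c_2}\bigr)=\sigma_{co}^{c_1\rightarrow c_2}-\mu_{co}^{c_1\rightarrow c_2}.
\end{equation*}
Setting $\delta = \eta_{co}^{c_1\rightarrow c_2}\pi^{c_2}-\pi^{c_1}-P_{co}^{c_1\rightarrow c_2}$ for the per-unit conversion margin and using $Q_{co}^{c_1\rightarrow c_2}>0$, the sign of the left-hand side equals the sign of $\delta$, so $\sigma_{co}^{c_1\rightarrow c_2}-\mu_{co}^{c_1\rightarrow c_2}$ inherits the sign of $\delta$.

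The three claimed cases then follow from complementary slackness, namely $\sigma_{co}^{c_1\rightarrow c_2}(x_{co}^{c_1\rightarrow c_2}-1)=0$ and $\mu_{co}^{c_1\rightarrow c_2}x_{co}^{c_1\rightarrow c_2}=0$. If $\delta>0$, then $\sigma_{co}^{c_1\rightarrow c_2}>\mu_{co}^{c_1\rightarrow c_2}\ge 0$ forces $\sigma_{co}^{c_1\rightarrow c_2}>0$, hence $x_{co}^{c_1\rightarrow c_2}=1$; if $\delta<0$, symmetrically $\mu_{co}^{c_1\rightarrow c_2}>0$ forces $x_{co}^{c_1\rightarrow c_2}=0$; and if $\delta=0$, both multipliers may vanish, leaving $x_{co}^{c_1\rightarrow c_2}$ free in $[0,1]$. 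I do not anticipate a deep obstacle: the only genuine care lies in the sign conventions of the stationarity condition and in correctly placing the variable in the two balance equations (the asymmetry between the $+Q$ term for $c_1$ and the $-\eta Q$ term for $c_2$), since a sign slip would flip the profitability interpretation. A secondary point worth stating explicitly is that, because $Q_{co}^{c_1\rightarrow c_2}>0$, dividing through preserves inequalities, which is precisely what makes the economic reading ``$\delta>0 \Leftrightarrow$ profitable $\Leftrightarrow$ fully accepted'' come out correctly.
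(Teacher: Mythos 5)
Your proof is correct. The bookkeeping of where $x_{co}^{c_1\rightarrow c_2}$ enters the problem matches \eqref{model:conversion_bids_model_obj_gen} and \eqref{model:conversion_bids_balance_condition} exactly; the stationarity relation $Q_{co}^{c_1\rightarrow c_2}\bigl(\eta_{co}^{c_1\rightarrow c_2}\pi^{c_2}-\pi^{c_1}-P_{co}^{c_1\rightarrow c_2}\bigr)=\sigma_{co}^{c_1\rightarrow c_2}-\mu_{co}^{c_1\rightarrow c_2}$ is precisely the dual-feasibility (reduced-cost) condition for that column; and the three cases follow from complementary slackness, which indeed holds between any optimal primal solution and any optimal dual solution of a linear program, so no compatibility issue arises between the given $\pi^c$ and the given primal optimum. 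Your route does, however, differ in presentation from the paper's. The paper never writes the KKT system of the centralised LP: it proves the general Theorem \ref{theorem:competitive-equilibrium} by dualizing only the balance constraints \eqref{generalmodel-balance}, invoking strong duality for this partial Lagrangian dual, and using separability across participants to conclude that the welfare-optimal allocation solves each participant's profit-maximisation problem at the prices $\pi^*$; the proposition is then obtained by solving the resulting one-variable problem $\max\bigl\{(\eta_{co}^{c_1\rightarrow c_2}\pi^{c_2}-\pi^{c_1}-P_{co}^{c_1\rightarrow c_2})Q_{co}^{c_1\rightarrow c_2}x \,:\, 0\le x\le 1\bigr\}$ by inspection. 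Both arguments rest on the same LP duality, but they buy different things: yours is self-contained and locally verifiable, requiring only the reduced cost of a single column, whereas the paper's factors the work through one theorem whose participant feasible sets $B_k x_k \le b_k$ are arbitrary, so the same statement immediately covers storage orders and pro-rata and cumulative constraints, which a per-column complementary-slackness computation would have to redo case by case.
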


\textcolor{black}{This means that for the market prices, obtained as optimal dual variables of the \textcolor{black}{energy} balance constraints \eqref{model:conversion_bids_balance_condition}, the matched \textcolor{black}{energy} volumes (primal decisions) by the market operator correspond to optimal decisions for the market participants facing these market prices and optimising their profit: given those prices, no market participant can be better off by choosing other matched volumes. These market prices hence support a Walrasian equilibrium (see e.g. \cite{Mas-Colell1995}).}

\subsubsection{Storage orders}
\textcolor{black}{In this Section, we introduce the novel storage order type, which creates a tool for market participants with limited energy content\footnote{In this paper, for brevity, we refer to storage technology, when we refer to technology with limited energy content, such as storage technology or flexible demand response technology, e.g. electric vehicles, heat pumps, etc.} to mitigate the risks related to the energy price forecasts at different times, and a way to extend the market to more easily integrate flexibility. }
A storage order is used  to  define  willingness  to  transport  energy  over  time  at a certain price, if the technical limitations of the market participant are satisfied. 

A storage order is defined for a \textcolor{black}{single} energy carrier.\footnote{\textcolor{black}{Note that in this light, the proposed novel storage order type could straightforwardly be introduced in the existing European day-ahead electricity markets.}} For the sake of conciseness,  we only consider storage orders for electricity, however, the models can easily be extended to include storage orders for any energy carrier.

A storage order $s$ \textcolor{black}{is defined by }the quantities $Q_{s,t}^{out}$ \textcolor{black}{[MWh]} of \textcolor{black}{energy} that can be \textcolor{black}{consumed from (}taken out of) the market \textcolor{black}{and the quantities} $Q_{s,t}^{in}$ \textcolor{black}{[MWh]} that can be injected (\textcolor{black}{produced, }sold back) in the market at time $t$. To be consistent with the sign conventions used above where the supply quantities are negative and demand quantities are positive, we use here the convention according to which $Q_{s,t}^{in}<0$ and $Q_{s,t}^{out}>0$. 
\textcolor{black}{Next to the energy quantities, a storage order is characterised by a}  parameter $P_{s}^{spread}$ \textcolor{black}{[\euro{}/MWh], which} represents an average cost or an average price spread to be recovered by the market participant to provide energy storage \textcolor{black}{services as specified by the storage order} $s$ . \textcolor{black}{Lastly, a maximal amount of energy , $E_{s}^{max}$ [MWh], the initial and final energy levels, $E_{s}^{i}$ [MWh],} charging efficiency parameter $\eta^{out}_{s}$ and a discharging efficiency parameter $\eta^{in}_{s}$ can be specified to account for losses due to the energy injection or withdrawal operations.

The market clearing problem \textcolor{black}{with the proposed storage order type }is defined as the following \textcolor{black}{social welfare maximisation problem}: 
\begin{subequations}
\begin{align}
    \max_{x} &\sum_{t \in \mathcal{T}}  \bigg[ \sum_{o \in \mathcal{O}_t^{e}} P^{e}_{o,t} Q^{e}_{o,t} x^{e}_{o,t} \bigg] -  \sum_{s \in \myset{TS}} \sum_{t \in \mathcal{T}}  \Big[ P_{s}^{spread} x_{s,t}^{out} Q_{s,t}^{out}  \Big], 
    \label{model:storage_primal_1_313} \\
  \text{s.t.}  & \sum_{o \in \mathcal{O}_t^e} Q^{e}_{o,t} x^{e}_{o,t}   =  - \sum_{s} Q_{s,t}^{out}  x_{s,t}^{out} -  \sum_{s} Q_{s,t}^{in} x_{s,t}^{in}\eta^{in}_{s} , \  t \in \myset{T}, & [\pi_{t}^e] \label{storage-order:elec-balance-constraint} \\
    & e_{s, t} =  e_{s,t-1} + Q_{s,t}^{in} x_{s,t}^{in} + Q_{s,t}^{out} x_{s,t}^{out}\eta_{s}^{out},  t \in \myset{T},s \in \myset{TS}, & [\sigma_{s,t}^{e}] \label{storage_balance_313} \\
    & e_{s, t} \leq E_{s}^{max}, t \in \myset{T},s \in \myset{TS},  & [\sigma^{max}_{s,t}] \label{storage_max_energy} \\
    & e_{s, 0} = e_{s, T} = E_{s}^{i}, s \in \myset{TS}, & [\sigma_{s,t}^{i}, \sigma_{s,t}^{f}] \label{initial_state_313} \\
    & e_{s, t}, x^{g}_{o,t}, x^{e}_{o,t}, x^{h}_{o,t}, x_{s,t}^{in}, x_{s,t}^{out} \geq 0,  \\
    &x^{e}_{o,t} \leq 1,  t \in \myset{T}, o \in \mathcal{O}_t^e, & [\sigma^{e}_{o,t}] \\
    & x^{in}_{s,t} \leq 1,  t \in \myset{T},s \in \myset{TS}, & [\sigma^{in}_{s,t}] \\
    & x^{out}_{s,t} \leq 1,  t \in \myset{T},s \in \myset{TS},  & [\sigma^{out}_{s,t}] \label{model:storage_primal_end_313} 
\end{align}
\end{subequations}
where the variables $x_{s,t}^{out}$ and $x_{s,t}^{in}$ correspond respectively to the fraction of the total quantity $Q_{s,t}^{out}$ of \textcolor{black}{energy} that can be \textcolor{black}{consumed from} the market or $Q_{s,t}^{in}$ that can be \textcolor{black}{produced} at time $t$.

Conditions \eqref{storage-order:elec-balance-constraint} are the \textcolor{black}{energy} balance constraints for the different periods of the market clearing horizon.\footnote{\textcolor{black}{Note that the convention according to which $Q_{s,t}^{in}<0$ and $Q_{s,t}^{out}>0$ is used, as explained above.}} Conditions (\ref{storage_balance_313}) are storage energy balance conditions written for each storage order and each period. \textcolor{black}{These conditions enable keeping track of the energy storage content, and hence guaranteeing that the minimal and maximal energy storage capacities are not overridden, as specified by equation \eqref{storage_max_energy}.} Condition (\ref{initial_state_313}) indicates that \textcolor{black}{amount of energy in} the storage at the end of the \textcolor{black}{market clearing} horizon $e_{s, T}$ should be equal to the initial \textcolor{black}{amount of energy} $e_{s, 0}$, which is also fixed to an external parameter value $E_{s}^{i}$ submitted by the market participant (cf. condition (\ref{storage_balance_313}) written for $t=0$ or $t=T$ respectively). Note that in practice, condition (\ref{initial_state_313}) should not be considered as an explicit constraint: instead, condition (\ref{storage_balance_313}) for $t=0$ or $t=T$ is written with $e_{s, 0}$, $e_{s, T}$ substituted by the parameter $E_{s}^{i}$. The dual variables $\sigma_{s,t}^{i}$ and  $\sigma_{s,t}^{f}$ in \eqref{initial_state_313} correspond to the initial respectively final state versions of the constraints. Note that such a storage order focuses on offering transportation of energy across time: market participants willing to buy or sell energy besides offering their storage capabilities can still post buy or sell orders besides their storage order, and specify various links between them via the constraints discussed in Section \ref{section:constraints}.

\textcolor{black}{We now show that the outcome of a market with storage orders results in economically efficient outcomes for participants that make use of the proposed storage orders.}

\begin{proposition}[Special case of Theorem \ref{theorem:competitive-equilibrium} in appendix.] \textbf{Competitive equilibrium in a market clearing with storage orders}\\
Given the market prices of electricity $\pi_{t}^e$ obtained as optimal dual variables of (\ref{model:storage_primal_1_313})-(\ref{model:storage_primal_end_313}), any optimal solution of this optimisation problem is such that the corresponding optimal values of $x_{s,t}^{in}, x_{s,t}^{out}$ are also solving the profit maximising optimisation problem of the individual market participant operating the storage order $s$ given the fixed market prices $\pi_t^e$:
\begin{subequations}
\begin{align}
    \max_{e_{s, t}, x_{s,t}^{out}, x_{s,t}^{in}} &\sum_t (-Q_{s,t}^{in}) x_{s,t}^{in} \eta_s^{in} (\pi_t^e) - \sum_t Q_{s,t}^{out} x_{s,t}^{out} (\pi_t^e + P_{s}^{spread}) \label{storage_individual_opt-1} \\
\text{\normalfont{s.t.}} ~  & x^{in}_{s,t} \leq 1, & [\sigma^{in}_{s,t}] \\
    & x^{out}_{s,t} \leq 1, &[\sigma^{out}_{s,t}] \\
    & e_{s, t} =  e_{s,t-1} + Q_{s,t}^{out}x_{s,t}^{out}\eta_{s}^{out} + Q_{s,t}^{in} x_{s,t}^{in}, & [\sigma_{s,t}^{e}] \\
    & e_{s, t} \leq E_{s}^{max}, & [\sigma^{max}_{s,t}] \\
    & e_{s, 0} = e_{s, T} = E_{s}^{i}, \\
    & e_{s, t}, x_{s,t}^{in}, x_{s,t}^{out} \geq 0. \label{storage_individual_opt-end-2}
\end{align}
\end{subequations}
\end{proposition}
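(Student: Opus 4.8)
The plan is to exploit the separable structure of the centralised problem once the coupling balance constraints \eqref{storage-order:elec-balance-constraint} are moved into the objective via a partial Lagrangian relaxation, keeping the remaining (storage-specific) constraints explicit. Concretely, I would fix the multipliers $\pi_t^e$ at their optimal values and form the partial Lagrangian of \eqref{model:storage_primal_1_313}-\eqref{model:storage_primal_end_313} in which only the balance constraints are dualised, while \eqref{storage_balance_313}-\eqref{initial_state_313} together with the bounds on $x_{s,t}^{in}, x_{s,t}^{out}$ and the non-negativity of $e_{s,t}$ are retained as an explicit feasible set $X$. The key structural observation is that both this Lagrangian and $X$ decouple into one block per storage order $s$ (plus a block for the elementary electricity orders), since distinct storage orders share variables only through the relaxed balance constraints.

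The first substantive step is the algebraic verification that the block of the partial Lagrangian associated with storage order $s$ coincides exactly with the individual objective \eqref{storage_individual_opt-1}. Writing the balance constraint as $\sum_{o} Q^{e}_{o,t} x^{e}_{o,t} + \sum_{s} Q_{s,t}^{out} x_{s,t}^{out} + \sum_{s} \eta^{in}_{s} Q_{s,t}^{in} x_{s,t}^{in} = 0$ and subtracting $\sum_t \pi_t^e$ times its left-hand side from the objective, the terms depending on order $s$ collect into
\[
\sum_t (-Q_{s,t}^{in}) x_{s,t}^{in} \eta_s^{in} \pi_t^e - \sum_t Q_{s,t}^{out} x_{s,t}^{out} (\pi_t^e + P_{s}^{spread}),
\]
where the sign convention $Q_{s,t}^{in} < 0$, $Q_{s,t}^{out} > 0$ must be tracked carefully to recover the stated form. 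This is precisely \eqref{storage_individual_opt-1}, and the constraints retained in block $s$ are exactly those of \eqref{storage_individual_opt-1}-\eqref{storage_individual_opt-end-2}.

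The second step is a standard convex-duality argument. Since the model is a linear program, strong duality holds for the partial relaxation: the partial Lagrangian dual function evaluated at $\pi^e$ equals the primal optimum $p^\star$, because for any balance-feasible point the relaxed term vanishes, so weak duality gives $\max_{x\in X} L(x,\pi^e) \geq p^\star$ for every multiplier and strong duality yields equality at the optimal $\pi^e$. Consequently, for any primal-optimal $x^\star$ one has $L(x^\star, \pi^e) = p^\star = \max_{x \in X} L(x, \pi^e)$, so $x^\star$ maximises the partial Lagrangian over $X$. Because both $L(\cdot, \pi^e)$ and $X$ separate across blocks, each block $(e_{s,t}^\star, x_{s,t}^{in\star}, x_{s,t}^{out\star})$ individually maximises its block Lagrangian over its block constraints, i.e. solves \eqref{storage_individual_opt-1}-\eqref{storage_individual_opt-end-2}, which is the claim.

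I expect the main obstacle to be setting up the duality step rigorously rather than any computation: one must justify that a primal optimum maximises the \emph{partial} Lagrangian over the retained feasible set at the optimal multipliers (a saddle-point statement for the partial relaxation, not the full one), and verify that the retained constraints genuinely decouple per storage order so that block-wise optimality follows. The collection of the $s$-block of the Lagrangian is routine but sign-sensitive, and is the one place where careless handling of the $Q_{s,t}^{in}<0$ convention would break the identification with \eqref{storage_individual_opt-1}.
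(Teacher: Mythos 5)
Your proposal is correct and follows essentially the same route as the paper: the paper proves the general Theorem~\ref{theorem:competitive-equilibrium} in the appendix by dualising only the balance constraints, invoking strong duality for this partial Lagrangian dual of the linear program, and exploiting separability of the Lagrangian and retained constraints per market participant to conclude block-wise optimality, exactly as you do for the storage blocks. Your explicit algebraic identification of the storage block of the Lagrangian with \eqref{storage_individual_opt-1} is precisely the specialisation step that the paper leaves implicit when it labels the proposition a special case of the appendix theorem.
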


Essentially, this proposition states that the acceptance decisions made by a market operator are, for given market prices, maximising the profits that a market participant using storage orders could make assuming infinite market depth (without worrying about the availability of orders to match the buy/sell operations), where the profits correspond to the revenues collected from the energy sold back in the market, minus the costs of procurement related to the energy bought at the different time steps, minus the operations costs (or spread) to recover, modelled by $P_{s}^{spread} Q_{s,t}^{out} x_{s,t}^{out}$.

Note that the design of the storage order allows to consider injections or withdrawals at multiple time periods, and in that case, it will still be guaranteed that for the prices obtained from the market clearing, the market participant would not prefer to make other decisions at the different time periods than those obtained in the market outcome.

Another interpretation of this result is given as follows: under assumptions of perfect competition, and given the proofs on the competitive equilibrium, using storage orders is the perfect tool to replace complex yet less efficient bidding strategies relying on elementary orders, for a flexible technology with limited energy content. 

\color{black}
As outlined above, conversion and storage orders enable to reflect operational restrictions and financial preferences of conversion and limited energy technologies in the market clearing. Nevertheless, there are specific technologies for which the conversion (storage) orders are insufficient to reflect operational restrictions that are needed to be reflected otherwise. 

Take two types of combined heat and power (CHP) technologies as an example: CHP with back-pressure technology and CHP with extraction-condensing technology \cite{deng2019generalized}. 
The feasible operational domain of a CHP with back-pressure technology is an increasing line with the slope defined as the power-to-heat ratio, i.e. a proportion between heat and electricity produced by converting an amount of input gas, as depicted in Figure~\ref{fig:cbc}. In contrast, the feasible operation domain of CHP unit with extraction-condensing technology is modelled as a polygon as depicted in Figure~\ref{fig:cec}. Neither conversion order nor storage order are capable of adequately representing these technical constraints. For this reason, in the following section, we introduce a set of novel constraints.

\textcolor{black}{Let us also note that in general, modelling different operation modes of a CHP, with different conversion costs \textcolor{black}{or efficiencies} for each operation mode, would require introducing exclusive constraints. \textcolor{black}{An exclusive constraint states that only one out of many orders constrained by an exclusive constraint can be accepted. The exclusive order type has already been defined and used in the current electricity markets in Europe under the name ``Block Orders in an Exclusive group''. A CHP operator could then use  exclusive constraints to couple a number of conversion orders, one per operation mode, to represent this techno-economic complexity of CHP operation. In this way}, market will choose which order, i.e. in which operation mode should CHP operate to maximise the social welfare. This topic, which is leading to questions on pricing in the presence of non-convexities introduced by the binary variables, is beyond the scope of the present paper.}
\color{black}
\begin{figure}[h]
\begin{subfigure}{0.4\textwidth}
\includegraphics[width=0.8\linewidth]{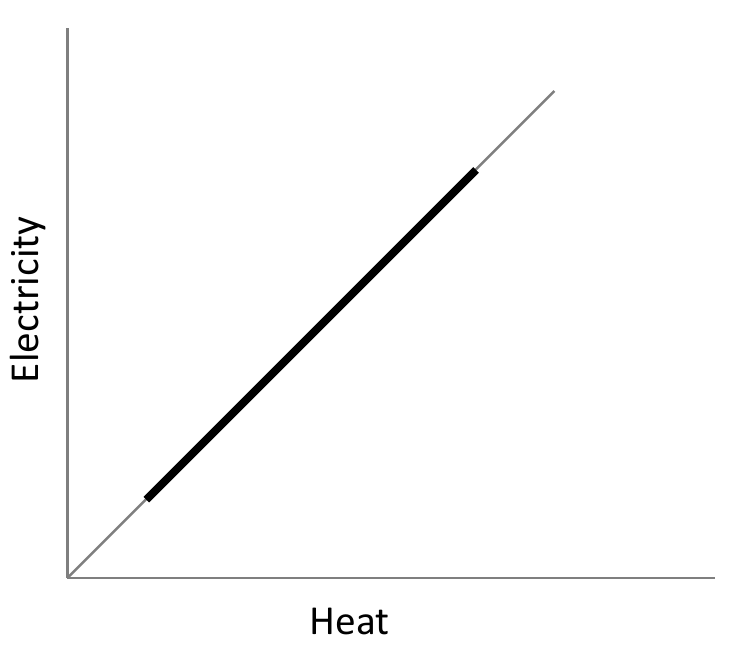} 
\caption{Back-pressure CHP technology.}
\label{fig:cbc}
\end{subfigure}
\begin{subfigure}{0.4\textwidth}
\includegraphics[width=0.8\linewidth]{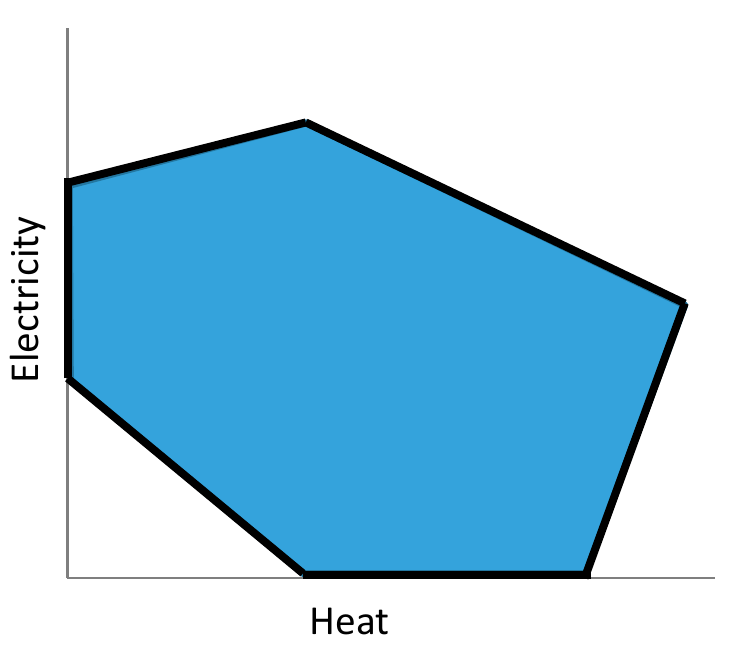}
\caption{Extraction-condensing CHP technology.}
\label{fig:cec}
\end{subfigure}
\caption{Feasible operation domain of CHP technologies. Figures are \textcolor{black}{inspired} from reference \cite{deng2019generalized}.}
\label{fig:CHP_tech}
\end{figure}

\subsection{Constraints on orders}\label{section:constraints}
We \textcolor{black}{propose two new} constraints that restrict how orders introduced above are cleared.  A constraint is a statement that makes the acceptance of an order conditional to the acceptance of another order in a predefined way.

\subsubsection{Pro-rata constraints} \label{continuous-constraints:pro-rata}
We \textcolor{black}{first introduce} pro-rata constraints which \textcolor{black}{enforce} that two orders should be accepted in same proportions. 

\color{black}
Pro-rata constraints are used by market participants by defining a set of orders to be constrained by a pro-rata constraint. 
\color{black}
The general form of pro-rata constraints is as follows:
\begin{align}
\label{constraint:Prorata}
    &x_{o} = x_{o^{'}}, &\forall pr=(o, o^{'}) \in \myset{PR},
\end{align}
where $pr \in \myset{PR}$ corresponds to the ordered pair of (elementary and/or conversion) orders to which the pro-rata constraint applies.

\color{black}
We illustrate a possible purpose of the pro-rata constraints and their inclusion in the market clearing on the following example of a back-pressure CHP (see Figure~\ref{fig:cbc}) with fixed electricity and heat output rates.

\color{black}

Let us assume that one can separate the cost of converting gas to electricity $P_{co}^{g\rightarrow e}$ from the cost of converting gas to heat $P_{co}^{g\rightarrow h}$ for the back-pressure CHP. \textcolor{black}{Obviously}, this is a simplifying assumption as in practice, splitting the cost of gas to heat and electricity conversions into two separated cost elements is not always a straightforward task. \textcolor{black}{However, let us note that today, bidders already have to proceed to such a split of costs even when such a split is not directly defined, to bid in the separate energy carriers markets, and without benefiting from the additional bidding tools that we introduce in this paper.}

\textcolor{black}{A market participant with such a back-pressure CHP, which is participating in a multi-carrier heat, gas and electricity market, could use two conversion orders, namely from gas to electricity and from gas to heat, constrained by a pro-rata constraint.}

The multi-carrier market clearing problem with elementary orders, conversion orders and pro-rata constraint on conversion orders is given as follows:
\begin{subequations}

\begin{equation}
    \max_{x \in X} \sum_{o \in \mathcal{O}^{g}} P^{g}_{o} Q^{g}_{o} x^{g}_{o} + \sum_{o \in \mathcal{O}^{e}} P^{e}_{o} Q^{e}_{o} x^{e}_{o} + \sum_{o \in \mathcal{O}^{h}} P^{h}_{o} Q^{h}_{o} x^{h}_{o} - \sum_c P_{co}^{g\rightarrow e} Q_{co}^{g\rightarrow e} x_{co}^{g\rightarrow e} - \sum_{co} P_{co}^{g\rightarrow h} Q_{co}^{g\rightarrow h} x_{co}^{g\rightarrow h},
    \label{model:CC_pr_primal_obj}
\end{equation}
\begin{align}
 \text{s.t.}   & \sum_{o \in \mathcal{O}^{g}} Q^{g}_{o} x^{g}_{o} = - \sum_{co} Q_{co}^{g\rightarrow e} x_{co}^{g\rightarrow e} - \sum_{co} Q_{co}^{g\rightarrow h} x_{co}^{g\rightarrow h}, & [\pi^g] 
\label{model:conversion_bids_model_g-to-p-h_eq1} \\
    & \sum_{o \in \mathcal{O}^{e}} Q^{e}_{o} x^{e}_{o} = \sum_{co} \eta_{co}^{g\rightarrow e} \sum_{co} Q_{co}^{g\rightarrow e} x_{co}^{g\rightarrow e}, & [\pi^e] \label{model:conversion_bids_model_g-to-p-h_eq2} \\
    & \sum_{o \in \mathcal{O}^{h}} Q^{h}_{o} x^{h}_{o} =  \sum_{co} \eta_{co}^{g\rightarrow h} \sum_{co} Q_{co}^{g\rightarrow h} x_{co}^{g\rightarrow h}, & [\pi^h] \label{model:conversion_bids_model_g-to-p-h_eq3} \\
    & x_{co}^{g\rightarrow e} \leq 1, co \in \mathcal{O}^{g\rightarrow e},   \\ 
    & x_{co}^{g\rightarrow h} \leq 1, co \in \mathcal{O}^{g\rightarrow h},   \\ 
    & x_{co_1}^{g\rightarrow e} = x_{co_2}^{g\rightarrow h},   (co_1,co_2) \in \myset{PR}, \label{prorata-cond_conversion} \\
    & x^{g}_{o}, x^{e}_{o}, x^{h}_{o} \leq 1,  o \in \mathcal{O}^{g} \cup \mathcal{O}^{e} \cup \mathcal{O}^{h},& [\sigma_o] \\\label{model:CC_pr_primal_end}
    &x^{g}_{o}, x^{e}_{o}, x^{h}_{o}, x_c^{g\rightarrow e},x_c^{g\rightarrow h} \geq 0.
\end{align}
\label{model:conversion_bids_model_g-to-p-h_eq_convcap}
\end{subequations}

The condition \eqref{prorata-cond_conversion} \textcolor{black}{is the implementation of the novel pro-rata constraint type in the market clearing,} and it guarantees that the amount of gas $Q_{co}^{g\rightarrow e} x_{co}^{g\rightarrow e}$ used to produce electricity \textcolor{black}{by the considered CHP} appearing in the gas balance condition \eqref{model:conversion_bids_model_g-to-p-h_eq1} is equal to the amount of gas used to produce heat $Q_{co}^{g\rightarrow h} x_{co}^{g\rightarrow h}$ \textcolor{black}{by the considered CHP} (the total amount of gas consumed is virtually "evenly split" between the electricity and heat production). Then, $\eta_{co}^{g\rightarrow e}$ and  $\eta_{co}^{g\rightarrow h}$ fix in which proportions electricity and heat can be produced per amount of gas, taking into account also the efficiency of each conversion process. In particular, $\eta_{co}^{g\rightarrow e}$ and  $\eta_{co}^{g\rightarrow h}$ are used to specify the fixed repartition of the total gas in input ($Q_{co}^{g\rightarrow e} x_{co}^{g\rightarrow e} + Q_{co}^{g\rightarrow h} x_{co}^{g\rightarrow h}$) between gas used to produce electricity ($\eta_{co}^{g\rightarrow e} Q_{co}^{g\rightarrow e} x_{co,t}^{g\rightarrow e}$), and gas used to produce heat ($\eta_{co}^{g\rightarrow h} Q_{co}^{g\rightarrow h} x_{co,t}^{g\rightarrow h}$), taking into account losses of conversion. 

By utilizing the conditions  (\ref{prorata-cond_conversion}), one thus indicates to the market that it is not possible to operate the conversion technology to produce electricity without producing heat as well, in certain fixed proportions specified by the parameters $\eta_{co}^{g\rightarrow e}$ and  $\eta_{co}^{g\rightarrow h}$.

\textcolor{black}{We now show that the market outcome of a market with elementary and conversion orders and pro-rata constraints (as defined by \eqref{model:CC_pr_primal_obj}-\eqref{model:CC_pr_primal_end}) results in economically efficient outcomes for market participants that make use of the proposed pro-rata constraints.}

\bigskip
\begin{proposition}[Special case of Theorem \ref{theorem:competitive-equilibrium} in appendix.] \textbf{Competitive equilibrium in a market clearing with pro-rata constraints}\\
Given the market prices $\pi^g, \pi^e, \pi^h$ obtained as optimal dual variables of (\ref{model:conversion_bids_model_g-to-p-h_eq1})-(\ref{model:conversion_bids_model_g-to-p-h_eq3}) in the market clearing problem defined by \eqref{model:CC_pr_primal_obj}-\eqref{model:CC_pr_primal_end}, and the conversion efficiencies $\eta_{co}^{g\rightarrow e}, \eta_{co}^{g\rightarrow h}$ and costs $P_{co}^{g\rightarrow e}, P_{co}^{g\rightarrow h}$, the primal solution $x^{g}_{o}, x^{e}_{o}, x^{h}_{o}, x_{co}^{g\rightarrow e}$, $x_{co}^{g\rightarrow h}$ given by the market operator also solves the profit maximising program of the market participant with a pro-rata constraint facing these market prices $\pi^g, \pi^e, \pi^h$:
\begin{subequations}
\begin{align}
\label{model:CC_prorata_obj}
\max_{x_{co}^{g\rightarrow e}, x_{co}^{g\rightarrow h}} & \Big((\eta_{co}^{g\rightarrow e} \pi^e - \pi^g) - P_{co}^{g\rightarrow e}\Big) Q_{co}^{g\rightarrow e} x_{co}^{g\rightarrow e}  + \Big((\eta_{co}^{g\rightarrow h} \pi^h  - \pi^g) -  P_{co}^{g\rightarrow h}\Big)Q_{co}^{g\rightarrow h} x_{co}^{g\rightarrow h}\\
\label{model:CC_prorate}
\text{\normalfont{s.t.}} ~ & x_{co}^{g\rightarrow e}, x_{co}^{g\rightarrow h} \leq 1,  \\
  & x_{co}^{g\rightarrow e} = x_{co}^{g\rightarrow h},  \\
\label{model:CC_endeq}
& x_{co}^{g\rightarrow e}, x_{co}^{g\rightarrow h} \geq 0.
\end{align}
\end{subequations}
\end{proposition}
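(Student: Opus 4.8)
The plan is to treat both the centralised clearing \eqref{model:CC_pr_primal_obj}--\eqref{model:CC_pr_primal_end} and the individual program \eqref{model:CC_prorata_obj}--\eqref{model:CC_endeq} as linear programs and to exploit the fact that, for an LP, the Karush--Kuhn--Tucker (KKT) conditions are both necessary and sufficient for optimality. Since the centralised solution $(x_o^g, x_o^e, x_o^h, x_{co}^{g\rightarrow e}, x_{co}^{g\rightarrow h})$ is optimal, it satisfies the centralised KKT system for some optimal multipliers, in particular the prices $\pi^g, \pi^e, \pi^h$ attached to \eqref{model:conversion_bids_model_g-to-p-h_eq1}--\eqref{model:conversion_bids_model_g-to-p-h_eq3}. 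It therefore suffices to show that the restriction of this KKT system to the two conversion variables $x_{co}^{g\rightarrow e}, x_{co}^{g\rightarrow h}$ coincides exactly with the KKT system of \eqref{model:CC_prorata_obj}--\eqref{model:CC_endeq}, so that $(x_{co}^{g\rightarrow e}, x_{co}^{g\rightarrow h})$ is a KKT point --- hence an optimum --- of the latter.

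First I would form the Lagrangian of the centralised LP and compute the stationarity conditions with respect to $x_{co}^{g\rightarrow e}$ and $x_{co}^{g\rightarrow h}$. These variables enter the objective only through $-P_{co}^{g\rightarrow e}Q_{co}^{g\rightarrow e}x_{co}^{g\rightarrow e}$ and $-P_{co}^{g\rightarrow h}Q_{co}^{g\rightarrow h}x_{co}^{g\rightarrow h}$, the gas balance \eqref{model:conversion_bids_model_g-to-p-h_eq1} with coefficients $Q_{co}^{g\rightarrow e}, Q_{co}^{g\rightarrow h}$ (weighted by $\pi^g$), and the electricity and heat balances \eqref{model:conversion_bids_model_g-to-p-h_eq2}--\eqref{model:conversion_bids_model_g-to-p-h_eq3} with coefficients $-\eta_{co}^{g\rightarrow e}Q_{co}^{g\rightarrow e}, -\eta_{co}^{g\rightarrow h}Q_{co}^{g\rightarrow h}$ (weighted by $\pi^e, \pi^h$). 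Collecting these contributions, the gradient of the centralised Lagrangian in $x_{co}^{g\rightarrow e}$ equals $\big((\eta_{co}^{g\rightarrow e}\pi^e - \pi^g) - P_{co}^{g\rightarrow e}\big)Q_{co}^{g\rightarrow e}$, which is precisely the objective coefficient of $x_{co}^{g\rightarrow e}$ in \eqref{model:CC_prorata_obj}; the analogous identity holds for $x_{co}^{g\rightarrow h}$. The only further multipliers touching these variables are the upper-bound duals of $x_{co}^{g\rightarrow e}\leq 1, x_{co}^{g\rightarrow h}\leq 1$, the free multiplier $\rho$ of the pro-rata equality \eqref{prorata-cond_conversion}, and the nonnegativity multipliers.

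Next I would assemble the individual KKT system and match it term by term. The feasible set of \eqref{model:CC_prorata_obj}--\eqref{model:CC_endeq} --- the bounds $x_{co}^{g\rightarrow e}, x_{co}^{g\rightarrow h}\leq 1$, the equality $x_{co}^{g\rightarrow e} = x_{co}^{g\rightarrow h}$, and nonnegativity --- is exactly the subsystem of the centralised constraints involving only these two variables, so primal feasibility is inherited verbatim from the centralised solution. Reusing the same upper-bound, pro-rata, and nonnegativity multipliers that certify centralised optimality, the stationarity and complementary-slackness conditions of the individual program hold identically, because (as shown above) the individual objective gradient is literally the centralised Lagrangian gradient once $\pi^g, \pi^e, \pi^h$ are fixed. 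Thus the centralised $(x_{co}^{g\rightarrow e}, x_{co}^{g\rightarrow h})$, together with the inherited multipliers, is a KKT point of the individual LP, and by sufficiency of KKT for linear (convex) programs it is optimal.

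I expect the only delicate point to be the bookkeeping of the pro-rata multiplier $\rho$: because \eqref{prorata-cond_conversion} couples the two conversion variables, $\rho$ appears simultaneously in the stationarity conditions for $x_{co}^{g\rightarrow e}$ and $x_{co}^{g\rightarrow h}$ in the centralised problem (with opposite signs), and one must check that this single free multiplier is exactly what is needed to satisfy both stationarity conditions in the individual program. Fixing a consistent sign convention for the Lagrangian (equality duals free, inequality duals nonnegative) makes the identification transparent; with that in place the two KKT systems are term-for-term identical and the claim follows. This argument is the pro-rata specialisation of the general competitive-equilibrium result, Theorem~\ref{theorem:competitive-equilibrium}, invoked in the statement.
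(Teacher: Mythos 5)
Your proof is correct, but it takes a genuinely different route from the paper's. The paper never verifies a KKT system for the pro-rata model at all: it proves the abstract Theorem \ref{theorem:competitive-equilibrium} once and for all by dualizing only the balance constraints \eqref{generalmodel-balance}, observing that the inner maximisation in the partial Lagrangian dual \eqref{partial-dual} separates per market participant, invoking strong duality for this partial dual, and then arguing that each $x_k^*$, being feasible for its own subproblem, must also be optimal for it --- otherwise the value equality \eqref{strong-duality-partial-dual2} would be violated. The pro-rata constraint \eqref{prorata-cond_conversion} is simply absorbed into the participant's private block $B_k x_k \leq b_k$ in \eqref{generalmodel-individual-constraints}, so the proposition follows by pure instantiation. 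You instead work directly on the special case: you compute the reduced objective coefficients $\big((\eta_{co}^{g\rightarrow e}\pi^e - \pi^g) - P_{co}^{g\rightarrow e}\big)Q_{co}^{g\rightarrow e}$ (and the heat analogue), observe that they coincide with the coefficients of \eqref{model:CC_prorata_obj}, and transplant the upper-bound, pro-rata and nonnegativity multipliers from the centralised KKT certificate into the individual one, concluding by sufficiency of KKT for linear programs. The two arguments are duality statements of the same fact --- KKT for an LP is exactly primal-dual feasibility plus complementary slackness --- but yours buys explicitness (one sees precisely which multipliers, including the single free pro-rata multiplier entering the two stationarity conditions with opposite signs, certify the participant's optimality), while the paper's value-based argument buys generality: one proof covers conversion, storage, pro-rata and cumulative constraints simultaneously, without ever writing a stationarity condition, at the cost of citing strong duality for the partial Lagrangian dual. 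One point worth making explicit in your write-up: the proposition hands you only the balance duals $\pi^g,\pi^e,\pi^h$, so you should note (as you implicitly do) that these extend to a full optimal dual vector and that \emph{any} optimal primal solution of an LP paired with \emph{any} optimal dual solution satisfies complementary slackness; this is what licenses pairing the given prices and the accompanying multipliers with the market operator's primal solution.
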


\color{black}
\subsubsection{Cumulative constraints}\label{continuous-constraints:cumulative}
A cumulative constraint links two or more orders by imposing a limit on the sum of acceptance ratios of the orders that are constrained. The cumulative constraint implies that the orders can be accepted by any ratio, as long as the weighted sum of the acceptance ratios of the orders, is less or equal to 1. This can be used to specify a maximum amount of energy that the supplier is willing to supply. 

Cumulative constraints are used by market participants by defining a set of orders to be constrained by the constraint.  The general form of cumulative constraints is as follows: 
\begin{equation}
\label{constraint: cummulative}
    \sum_{o \in \mathcal{O}^{cm}} \vartheta^{cm}_{o} x_{o}  \leq 1, \ \ \ \ \ \  cm \in \myset{CM},
\end{equation}
where $\mathcal{O}^{cm}$ is the set of orders constrained by the cumulative constraint, $\mathcal{CM}$ is the set of cumulative constraints and $\vartheta^{cm}_{o}$ are coefficient factors that are provided by the market participants.

We illustrate inclusion of cumulative constraints in the market clearing on an example of a extraction-condensing CHP unit (see Figure~\ref{fig:cec}) where output ratios of electricity and heat are not fixed beforehand. 

The CHP converts gas to electricity at a cost of conversion $P_{co}^{g\rightarrow e}$ with a given physical efficiency given by $\eta_c^{g\rightarrow e}$, and converts gas to heat at a cost $P_{co}^{g\rightarrow h}$ with a given physical efficiency $\eta_{co}^{g\rightarrow h}$. The total conversion capacity is $Q_{co}^{g\rightarrow eh}$. The fact that the total amount of input gas $Q_{co}^{g\rightarrow eh}$ is split into two parts is modelled via the cumulative constraint (\ref{model:conversion_bids_model_g-to-p-h_eq_convcap-bis}). A market clearing with cumulative constraint and conversion orders is defined as the following social welfare maximisation problem:
\begin{subequations}
\begin{multline}
        \max_{x} \sum_{o \in \mathcal{O}^{g}} P_{o} Q_{o} x_{o} + \sum_{o \in \mathcal{O}^{e}} P_{o} Q_{o} x_{o} + 
    \sum_{o \in \mathcal{O}^{h}} P_{o} Q_{o} x_{o} \\
    - \sum_{co \in \mathcal{O}^{g\rightarrow e}} P_{co}^{g\rightarrow e} Q_{co}^{g\rightarrow eh} x_{co}^{g\rightarrow e} 
    - \sum_{co \in \mathcal{O}^{g\rightarrow h}} P_{co}^{g\rightarrow h} Q_{co}^{g\rightarrow eh}x_c^{g\rightarrow h},\label{model:conversion_bids_model_g-to-p-h_obj-bis}
\end{multline}

\begin{align}
 \text{s.t.}   & \sum_{o \in \mathcal{O}^{g}} Q_{o} x_{o} = - \sum_{co \in \mathcal{O}^{g\rightarrow e}} Q_{co}^{g\rightarrow eh}x_{co}^{g\rightarrow e} - \sum_{co \in \mathcal{O}^{g\rightarrow h}} Q_{co}^{g\rightarrow eh} x_{co}^{g\rightarrow h}, & [\pi^g] \label{model:conversion_bids_model_g-to-p-h_eq1-bis} \\
    & \sum_{o \in \mathcal{O}^{e}} Q_{o} x_{o} = \sum_{co \in \mathcal{O}^{g\rightarrow e}} \eta_{co}^{g\rightarrow e}Q_{co}^{g\rightarrow eh}x_{co}^{g\rightarrow e}, & [\pi^e] \label{model:conversion_bids_model_g-to-p-h_eq2-bis} \\
    & \sum_{o \in \mathcal{O}^{h}} Q^{h}_{o} x^{h}_{o} =  \sum_{co \in \mathcal{O}^{g\rightarrow e}} \eta_{co}^{g\rightarrow h}Q_{co}^{g\rightarrow eh}x_{co}^{g\rightarrow h}, & [\pi^h] \label{model:conversion_bids_model_g-to-p-h_eq3-bis} \\
  & x_{o} \leq 1 , o \in \mathcal{O}^{g} \cup \mathcal{O}^{e} \cup \mathcal{O}^{h},  &  [\sigma_o] \\
    & x_{co_1}^{g\rightarrow e} + x_{co_2}^{g\rightarrow h} \leq 1 , co_1, co_2 \in \myset{O}^{cm},   \label{model:conversion_bids_model_g-to-p-h_eq_convcap-bis} \\
    &x_{o}, x_{co}^{g\rightarrow e}, x_{co}^{g\rightarrow h} \geq 0.\label{model:conversion_bids_model_g-to-p-h_eq_end-bis}
\end{align}
    
\end{subequations}

We now show that the market outcome of this market results in economically efficient outcomes for market participants. 
\begin{proposition}[Special case of Theorem \ref{theorem:competitive-equilibrium} in appendix.] \textbf{Competitive equilibrium in a market clearing with cumulative constraints}\\
Given the market prices $\pi^g, \pi^e, \pi^h$ obtained as optimal dual variables of (\ref{model:conversion_bids_model_g-to-p-h_eq1-bis})-(\ref{model:conversion_bids_model_g-to-p-h_eq3-bis}) and the conversion efficiencies $\eta_{co}^{g\rightarrow e}, \eta_{co}^{g\rightarrow h}$ and costs $P_{co}^{g\rightarrow e}, P_{co}^{g\rightarrow h}$, the primal solution given by the market operator is optimal for the profit maximising program of the market participant with a cumulative constraint on the two conversions whose decision variables are $x_{co}^{g\rightarrow e}, x_{co}^{g\rightarrow h}$:
\begin{subequations}
\begin{align}
\max_{x_{co}^{g\rightarrow e}, x_{co}^{g\rightarrow h}} &  \Big((\eta_{co}^{g\rightarrow e} \pi^e - \pi^g) - P_{co}^{g\rightarrow e}\Big) Q_{co}^{g\rightarrow eh} x_{co}^{g\rightarrow e}  + \Big((\eta_{co}^{g\rightarrow h} \pi^h  - \pi^g) -  P_{co}^{g\rightarrow h}\Big) Q_{co}^{g\rightarrow eh} x_{co}^{g\rightarrow h},\\
\label{model:profitMaximization_chp_cum}
\text{\normalfont{s.t.}} ~   & x_{co}^{g\rightarrow e} + x_{co}^{g\rightarrow h} \leq 1, \\
    & x_{co}^{g\rightarrow e}, x_{co}^{g\rightarrow h} \geq 0.
\end{align}
\end{subequations}
\end{proposition}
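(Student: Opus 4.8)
The plan is to reuse the optimal dual variables of the centralised clearing to build an optimality certificate for the individual program. Both \eqref{model:conversion_bids_model_g-to-p-h_obj-bis}--\eqref{model:conversion_bids_model_g-to-p-h_eq_end-bis} and the participant's profit-maximising program are linear programs, so in each case the Karush--Kuhn--Tucker (KKT) conditions are both necessary and sufficient for optimality; it therefore suffices to exhibit, for the individual program, a dual vector for which the clearing's primal values $x_{co}^{g\rightarrow e}, x_{co}^{g\rightarrow h}$ satisfy stationarity and complementary slackness. Exactly as in the preceding propositions, and as a special case of Theorem~\ref{theorem:competitive-equilibrium}, the natural candidate is the optimal multiplier $\mu \geq 0$ of the cumulative constraint \eqref{model:conversion_bids_model_g-to-p-h_eq_convcap-bis}, together with the prices $\pi^g,\pi^e,\pi^h$ read off from \eqref{model:conversion_bids_model_g-to-p-h_eq1-bis}--\eqref{model:conversion_bids_model_g-to-p-h_eq3-bis}.

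First I would write the stationarity conditions of the centralised LP with respect to the two conversion variables. Tracking how each variable enters the balance constraints --- $x_{co}^{g\rightarrow e}$ appears with coefficient $+Q_{co}^{g\rightarrow eh}$ in the gas balance \eqref{model:conversion_bids_model_g-to-p-h_eq1-bis} and $-\eta_{co}^{g\rightarrow e}Q_{co}^{g\rightarrow eh}$ in the electricity balance \eqref{model:conversion_bids_model_g-to-p-h_eq2-bis} (and not at all in heat), and symmetrically for $x_{co}^{g\rightarrow h}$ --- the reduced costs read
\begin{align*}
\frac{\partial \Lagr}{\partial x_{co}^{g\rightarrow e}} &= Q_{co}^{g\rightarrow eh}\big(\eta_{co}^{g\rightarrow e}\pi^e - \pi^g - P_{co}^{g\rightarrow e}\big) - \mu \leq 0, \\
\frac{\partial \Lagr}{\partial x_{co}^{g\rightarrow h}} &= Q_{co}^{g\rightarrow eh}\big(\eta_{co}^{g\rightarrow h}\pi^h - \pi^g - P_{co}^{g\rightarrow h}\big) - \mu \leq 0,
\end{align*}
each holding with equality whenever the corresponding variable is strictly positive, together with $\mu\,(x_{co}^{g\rightarrow e}+x_{co}^{g\rightarrow h}-1)=0$. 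The crucial observation is that these are verbatim the KKT conditions of the individual program: its objective gradients are precisely $Q_{co}^{g\rightarrow eh}(\eta_{co}^{g\rightarrow e}\pi^e-\pi^g-P_{co}^{g\rightarrow e})$ and $Q_{co}^{g\rightarrow eh}(\eta_{co}^{g\rightarrow h}\pi^h-\pi^g-P_{co}^{g\rightarrow h})$, and its single inequality is the same cumulative constraint \eqref{model:profitMaximization_chp_cum}. Setting the individual multiplier equal to $\mu$ thus reproduces the same stationarity and complementary-slackness system, so the clearing's primal values form a KKT point of the participant's program; by sufficiency of KKT for LPs they are optimal for the participant.

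The main obstacle is purely one of careful sign and coupling bookkeeping rather than of any deep argument: one must respect the convention $Q<0$ for sell and $Q>0$ for buy and the exact orientation of \eqref{model:conversion_bids_model_g-to-p-h_eq1-bis}--\eqref{model:conversion_bids_model_g-to-p-h_eq3-bis} so that each price $\pi^c$ attaches to its conversion variable with the correct coefficient. A point worth highlighting is that, unlike the elementary orders, the conversion variables carry \emph{no} independent bound $x_{co}\leq 1$ in \eqref{model:conversion_bids_model_g-to-p-h_obj-bis}--\eqref{model:conversion_bids_model_g-to-p-h_eq_end-bis}: they are capped solely through the cumulative constraint, so $\mu$ is the only coupling multiplier that has to be transferred, and the match to the individual program is exact with no extra $\sigma$-type multiplier to reconcile. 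Existence of the optimal duals $\pi^c$ and $\mu$ is guaranteed by LP strong duality once the clearing is feasible and bounded, which completes the certificate.
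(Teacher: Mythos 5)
Your proof is correct, but it takes a genuinely different route from the paper's. The paper does not prove this proposition by a dedicated argument at all: it invokes the general Theorem~\ref{theorem:competitive-equilibrium}, whose proof dualises \emph{only} the balance constraints \eqref{generalmodel-balance}, keeps each participant's own constraints $B_k x_k \leq b_k$ (here, the cumulative constraint and nonnegativity) inside the inner maximisation, and concludes from strong duality of that partial Lagrangian dual that the welfare-optimal $x_k^*$ must solve each separated inner profit problem --- the multiplier of the cumulative constraint never appears. You instead verify the proposition directly: you assemble the full KKT system of the centralised clearing, single out the multiplier $\mu$ of \eqref{model:conversion_bids_model_g-to-p-h_eq_convcap-bis}, and observe that stationarity plus complementary slackness in the two conversion variables is verbatim the KKT system of the participant's program with the same $\mu$, so KKT sufficiency for linear programs finishes the job. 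Your sign bookkeeping is consistent with the orientation of \eqref{model:conversion_bids_model_g-to-p-h_eq1-bis}--\eqref{model:conversion_bids_model_g-to-p-h_eq3-bis}, and your observation that the conversion variables carry no individual $\leq 1$ caps in this model --- so that $\mu$ is the \emph{only} multiplier that must be transferred --- is exactly the structural fact that makes the two KKT systems coincide with no $\sigma$-type residue. What each approach buys: yours is self-contained and exhibits an explicit optimality certificate, making the economic role of the scarcity rent $\mu$ on the shared conversion capacity visible, but it must be redone (with fresh bookkeeping) for every new order type; the paper's abstraction proves one statement once and then harvests all four propositions (conversion, storage, pro-rata, cumulative) as special cases, at the price of never displaying the concrete multipliers.
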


The result of this theorem can be explained for the CHP technology in the example above as follows. By using the cumulative constraint, the owner of the CHP allows the market to decide which fraction of the input gas $Q_{co}^{g\rightarrow eh}$ will be used for the electricity production process, and which fraction will be used for the heat production process. Such repartition decisions will be made by the market operator in a way which is optimal for the market participant if it is price-taker, i.e. under the realized market prices, no other repartition results in higher profit for the CHP owner. 

A competitive equilibrium can also be obtained by maximising welfare when pro-rata and cumulative constraints are considered: they should then be considered both in the welfare maximisation problem and the individual profit maximising problems of market participants, as they also define their feasible set of decisions.

\section{Centralised and decentralised market clearing algorithms for the integrated multi-carrier energy markets} \label{section:marketdesigns}

Instead of the fully decoupled energy carrier markets, like the ones defined in Section \ref{section:elementary_orders} by \eqref{model:basemodel_obj}, energy carrier markets can be integrated via the use of conversion orders, or pro-rata and cumulative constraints coupling decision variables pertaining to different energy carrier markets, as explained in section \ref{section:ordertypes}. At least in theory and assuming no strategic behaviour of market participants, this design gives the most efficient market outcome: we come back to this point in Section \ref{sec:numerical-results:conversion-to-elementary} comparing markets with and without conversion (resp. storage) orders as bidding products for conversion (resp. storage) technology owners. Note that strategic behaviour of market participants is out of scope of this paper.

In this Section, we show how integrated multi-carrier energy markets with conversion orders and pro-rata and cumulative constraints can  be cleared in a centralised and in a decentralised manner\footnote{Note that a detailed classification of high-level possible multi-carrier energy market coordination was introduced in \cite{Kessels2019Innovative}, and the proposed market clearings which follow from the decentralised algorithms introduced in this section could be related to these market coordination schemes.}. In a centralised multi-carrier energy market clearing, a unique market operator (MO) clears the three carrier markets simultaneously. The centralised MO has full access to all the information about all orders.
In the decentralised market clearing, every carrier market is operated by a different carrier MO operating independent but exchanging information with the other MOs. 
In a decentralised multi-carrier energy market clearing, each carrier market has its own MO and limited information is exchanged between the involved parties. 

Two decentralised clearing algorithms are considered here, which differ in the agents or MOs involved and the data that they need to exchange. Both decentralised market clearing algorithms are obtained by considering a Lagrangian decomposition to solve the initial centralised, fully integrated multi-carrier energy market, where a Lagrangian dual is solved via a subgradient method \cite{shor1985minimization, nesterov2018lectures, anstreicher2009two, sherali1996recovery}. As we will see in more detail, it hence leads to results equivalent to those that would be obtained via a centralised market clearing. Note that other decomposition methods such as alternating direction method of multipliers (ADMM) \cite{boyd2011distributed,erseghe2014distributed} could have been considered. Lagrangian decomposition has been chosen here for its simplicity as it leads to an obvious separation of the optimisation problems to be solved by each market operator. Subgradient methods could be slower than other methods in practical cases but can also be more adequate on huge-scale problems or problems with special structure.

For the sake of simplicity of exposition, we consider elementary and conversion orders, but we do not explicitly consider storage orders and pro-rata or cumulative constraints in this section, although they could  straightforwardly be included in all developments which follow. 

\subsection{Integrated multi-carrier market with centralised clearing (MC1)}
We first consider the following centralised, integrated (gas, heat and electricity) market clearing problem which follows from the previous sections:

\begin{subequations}
\begin{multline}
\max_{x \in X} \Theta(x) :=
     \sum_{o \in \mathcal{O}^{g}} P^{g}_{o} Q^{g}_{o} x_{o} 
    + \sum_{o \in \mathcal{O}^{e}} P^{e}_{o} Q^{e}_{o} x_{o}
    + \sum_{o \in \mathcal{O}^{h}} P^{h}_{o} Q^{h}_{o} x_{o}\\
    - \sum_{co \in \mathcal{O}^{g\rightarrow e}} P_{co}^{g\rightarrow e} Q_{co}^{g\rightarrow e} x_{co}^{g\rightarrow e}
    - \sum_{co \in \mathcal{O}^{g\rightarrow h}} P_{co}^{g\rightarrow h} Q_{co}^{g\rightarrow h} x_{co}^{g\rightarrow h},
        \label{MD53:conversion_bids_model_obj_gen}    
\end{multline}

\begin{align}
\text{s.t.} ~    & \sum_{o \in \mathcal{O}^{g}} Q^{g}_{o} x^{g}_{o} +  \sum_{c \in \mathcal{O}^{g\rightarrow e}} Q_{co}^{g\rightarrow e} x_{co}^{g\rightarrow e} +  \sum_{co \in \mathcal{O}^{g\rightarrow h}} Q_{co}^{g\rightarrow h} x_{co}^{g\rightarrow h} = 0, & [\pi^g]
    \label{md523:power_balance_g} \\
    & \sum_{o \in \mathcal{O}^{e}} Q^{e}_{o} x^{e}_{o}  
    -  \sum_{co \in \mathcal{O}^{g\rightarrow e}} \eta_{co}^{g\rightarrow e} Q_{co}^{g\rightarrow e} x_{co}^{g\rightarrow e} = 0,  & [\pi^e] 
    \label{md523:power_balance_e} \\
    & \sum_{{o \in \mathcal{O}^{h}}} Q^{h}_{o} x^{h}_{o} -  \sum_{co \in \mathcal{O}^{g\rightarrow h}} \eta_{co}^{g\rightarrow h}Q_{co}^{g\rightarrow h} x_{co}^{g\rightarrow h} = 0, & [\pi^h]
    \label{md523:power_balance_h}\\
    & x_{o}^{g}, x_{o}^{e}, x_{o}^{h} \leq 1,  o \in \mathcal{O}^{g,e,h}, & [\sigma_{o}^{g}, \sigma_{o}^{h}, \sigma_{o}^{h}]   \label{model523:conversion_bids_model_eq4}\\
    & x_{co}^{g\rightarrow e} \leq 1,  c \in \mathcal{O}^{g \rightarrow e}, & [\sigma_{co}^{g\rightarrow e}]  \label{model523:conversion_bids_model_eq_convcap} \\
    & x_{co}^{g\rightarrow h} \leq 1 ,  co \in \mathcal{O}^{g \rightarrow h}, & [\sigma_{co}^{g\rightarrow h}]  \label{model523:conversion_bids_model_eq_convcap2} \\
    & x_{o}^{g}, x_{o}^{e}, x_{o}^{h}, x_{co}^{g\rightarrow e}, x_{co}^{g\rightarrow h} \geq 0.\label{model523:conversion_bids_model_eq_end}
\end{align}
\end{subequations}

The variables $x_{co}^{g\rightarrow e}, x_{co}^{g\rightarrow h}$ in constraints \eqref{md523:power_balance_g} - \eqref{md523:power_balance_h} prevent to split the optimisation problem into sub-problems per energy carrier that could be solved independently.

In case of the centralised market clearing, a single, centralised MO is in charge of operating the market.  It receives all the orders  and clears the market of all the carriers. It is a single-stage optimisation problem that maximises the aggregated social welfare of all the carrier markets.

Such a centralised multi-carrier energy market clearing with conversion orders is computationally attractive since it amounts to solving a (potentially large) linear program for which highly efficient methods and solvers are available. Prices can then be obtained either by explicitly solving the dual linear program, or obtained as optimal values of dual variables of the balance constraints. If price indeterminacies should be lifted (choosing  unique market clearing prices among a set of possibilities), a dedicated price problem to resolve these indeterminacies could also be considered. The feasible set of such a price problem is the set of optimal dual solutions to the initial welfare maximising primal problem, and an objective is chosen so as to select  unique, well-defined prices.

Note that here, the centralised MO has access to all information (including all orders, constraints and products), and clears each of the carrier markets considering directly all inter-dependencies that exist between the different carriers. 

In the next subsection, we consider two decentralised clearing algorithms in which each carrier market has its own MO and where coordination between the MO ensures that the iterative processes used to clear the markets in a decentralised way converge to results equivalent to results with a centralised market clearing (both sets of results are optimal solutions of the same welfare optimisation problem and correspond to a competitive equilibrium).

\subsection{Integrated multi-carrier market with decentralised clearing}
We consider in this section two decentralised clearing algorithms that both rely on solving the Lagrangian dual of the original centralised market clearing problem \eqref{MD53:conversion_bids_model_obj_gen}-\eqref{model523:conversion_bids_model_eq_end} via a subgradient optimization method allowing by construction a decentralisation of the computations among the different carrier market operators. They differ in which constraints are relaxed to construct the Lagrangian dual where the Lagrangian sub-problem can then be split into optimisation sub-problems separable per market operator (or `agent'). This has implications in terms of involved agents and exchange of information. The two decentralised market clearings are described in section~\ref{section:md5.2} and section~\ref{section:md5.3}.

Moreover, we provide precise mathematical statements regarding the convergence of solutions of the decomposition methods to an optimal solution of the centralised market clearing problem~\eqref{MD53:conversion_bids_model_obj_gen}-\eqref{model523:conversion_bids_model_eq_end}, by relying on relatively well-known results on primal convergence when sub-gradient optimisation methods are used to solve linear programs \cite{anstreicher2009two,sherali1996recovery}. (Note that recovery of primal solutions by means of averaging iterates is attributed to N. Shor.) Decentralised market clearing based on decomposition methods has been considered in the power systems and energy markets literature, see e.g. \cite{sorin2018consensus} and references therein. However, a rigorous convergence analysis is sometimes missing in this literature, in particular for the primal variables where the question of the convergence of iterates to a feasible and optimal primal solution is often not explicitly discussed, as highlighted in the reference \cite{sorin2018consensus}: ``it is common to use a dual convergence criterion only, with the understanding that primal and dual convergence are linked. Further work should be done to understand if it is also the case here". Let us note that primal converge is well-known for example for ADMM \cite{boyd2011distributed} as an alternative decomposition method. 

In our context, conversion orders play a pivotal role in the sense that, if there would be no conversion order, carrier markets would be considered decoupled and would be cleared separately. However, if there are conversion orders, there are shared (i.e., common) variables between carrier markets which can then not be cleared separately. We propose the two following approaches to deal with conversion orders (decompose the problem under existence of the common variables):
\begin{itemize}
    \item Approach 1 (decentralised multi-carrier market clearing with a single market operator per carrier market, MC2): introduce an extra auxiliary variable to present the acceptance ratio of a conversion order in the source and destination markets via two variables that are set to be equal, that is: $x^{c_1 \rightarrow c_2}_{co, t, c_1} = x^{c_1 \rightarrow c_2}_{co, t, c_2}$ where $c_1$ denotes the source market and $c_2$ the destination market. These constraints linking the duplicate conversion order variables are then 'dualised'. This approach  results in a setup where one MO is assigned to each carrier market. It is described in section~\ref{section:md5.2}. 
    \item Approach 2 (decentralised multi-carrier market clearing with an additional market operator for conversion orders, MC3): Dualise the balance constraints for each carrier market and define an auxiliary market operator, besides the market operators for each carrier market, that only processes conversion orders. In other words, there is one MO  assigned to each carrier market and an additional MO assigned to clear all the conversion orders. The approach is described in section \ref{section:md5.3}.
\end{itemize}

\subsubsection{Decentralised multi-carrier market clearing: a single market operator per carrier market (MC2)} \label{section:md5.2}
Let us first reformulate \eqref{MD53:conversion_bids_model_obj_gen} - \eqref{model523:conversion_bids_model_eq_end} by duplicating the variables $x_{co}^{g\rightarrow e}, x_{co}^{g\rightarrow h}$ that for now still prevent the optimisation problem to be split into sub-problems per energy carrier: $x_{co}^{g\rightarrow e}$ and $x_{co}^{g\rightarrow h}$ are respectively replaced in the gas balance constraint \eqref{model_5-2:pro-rata_ge}   by $x_{co,g}^{g\rightarrow e}$ and $x_{co,g}^{g\rightarrow h}$. Likewise $x_{co,e}^{g\rightarrow e}$ and $x_{co,h}^{g\rightarrow h}$ are respectively placed in the electricity balance \eqref{model_5-2:electricity-balance} and heat balance constraint \eqref{model_5-2:Part1} and the duplicate conversion order variables are required to take the same value via constraints \eqref{model_5-2:pro-rata_ge} and \eqref{model_5-2:pro-rata_gh}. 

The following is hence fully equivalent to \eqref{MD53:conversion_bids_model_obj_gen} - \eqref{model523:conversion_bids_model_eq_end}:
\begin{subequations}
\begin{align}
    \max_{x} \Omega(x) := &\sum_{o \in \mathcal{O}^{g}} P^{g}_{o} Q^{g}_{o} x^{g}_{o} + \sum_{o \in \mathcal{O}^{e}} P^{e}_{o} Q^{e}_{o} x^{e}_{o} + \sum_{o \in \mathcal{O}^{h}} P^{h}_{o} Q^{h}_{o} x^{h}_{o}  - \sum_{co \in \mathcal{O}^{g\rightarrow e}} P_{co}^{g\rightarrow e}Q_{co}^{g\rightarrow e} x_{co, e}^{g\rightarrow e} \notag \\
    & - \sum_{co \in \mathcal{O}^{g\rightarrow h}} P_{co}^{g\rightarrow h}Q_{co}^{g\rightarrow h} x_{co, h}^{g\rightarrow h},
    \label{model_5-2:conversion_bids_prorata_obj}
\end{align}

\begin{align}
 \text{s.t.} ~   & x_{co, g}^{g\rightarrow e} =  x_{co, e}^{g\rightarrow e},  co \in \mathcal{O}^{g\rightarrow e}_{}, & [\sigma_c^{g\rightarrow e}] \label{model_5-2:pro-rata_ge} \\
    &x_{co, g}^{g\rightarrow h} =  x_{co, h}^{g\rightarrow h}, co \in \mathcal{O}^{g\rightarrow h}_{},  & [\sigma_{co}^{g\rightarrow h}] \label{model_5-2:pro-rata_gh}  \\
    & \sum_{o \in \mathcal{O}^{g}} Q^{g}_{o} x^{g}_{o} = - \sum_{co \in \mathcal{O}^{g\rightarrow e}} Q_{co,g}^{g\rightarrow e} x_{co, g}^{g\rightarrow e} - \sum_{co \in \mathcal{O}^{g\rightarrow h}} Q_{co,g}^{g\rightarrow h} x_{co, g}^{g\rightarrow h}, & [\pi^g] \label{model_5-2:Part0}  \\
    & \sum_{o \in \mathcal{O}^{e}} Q^{e}_{o} x^{e}_{o} = \sum_{co \in \mathcal{O}^{g\rightarrow e}} Q_{co,e}^{g\rightarrow e} x_{co, e}^{g\rightarrow e}, & [\pi^e] \label{model_5-2:electricity-balance} \\
    & \sum_{{o \in \mathcal{O}^{h}}} Q^{h}_{o} x^{h}_{o} = \sum_{co \in \mathcal{O}^{g\rightarrow h}} Q_{co,h}^{g\rightarrow h} x_{co, h}^{g\rightarrow h}, & [\pi^h] \label{model_5-2:Part1}\\
\label{model_5-2:Part2}
    & x_{o}^{g}, x_{o}^{e}, x_{o}^{h}, x_{co, g}^{g\rightarrow e}, x_{co, e}^{g\rightarrow e}, x_{co, g}^{g\rightarrow h}, x_{co, h}^{g\rightarrow h} \leq 1, \\
    & x_{o}^{g}, x_{o}^{e}, x_{o}^{h}, x_{co, g}^{g\rightarrow e}, x_{co, e}^{g\rightarrow e}, x_{co, g}^{g\rightarrow h}, x_{co, h}^{g\rightarrow h} \geq 0. \label{model:conversion_bids_prorata_end}
\end{align}
\end{subequations}  

Equations~\eqref{model_5-2:pro-rata_ge} and \eqref{model_5-2:pro-rata_gh} are the additional constraints on duplicated variables constraints that link the duplicate acceptance variables for  the gas-to-electricity and gas-to-heat conversion orders, respectively. If these constraints are relaxed, the multi-carrier market clearing optimisation problem \eqref{model_5-2:conversion_bids_prorata_obj} - \eqref{model:conversion_bids_prorata_end}  can be split per energy carrier, i.e. per carrier MO. 
Let us relax the complicating constraints \eqref{model_5-2:pro-rata_ge} and \eqref{model_5-2:pro-rata_gh} by incorporating them into the objective function \eqref{model_5-2:conversion_bids_prorata_obj} using the  Lagrangian multipliers $\sigma_{co}^{g\rightarrow e}, \sigma_{co}^{g\rightarrow h} $:
\noindent
\begin{align}
    \Lagr(x, \sigma_{c}^{g\rightarrow e}, \sigma_{co}^{g\rightarrow h}) = &\Omega(x) + \sum_{co \in \mathcal{O}^{g\rightarrow e}} \sigma_{co}^{g\rightarrow e} (x_{co, g}^{g\rightarrow e} - x_{co, e}^{g\rightarrow e}) +  \notag\\ 
    &\sum_{co \in \mathcal{O}^{g\rightarrow h}_{}} \sigma_{co}^{g\rightarrow h} (x_{co, g}^{g\rightarrow h} - x_{co, h}^{g\rightarrow e})
    \label{model_5-2:LR_objective},
\end{align}

where $\Omega(x)$ is the objective function in \eqref{model_5-2:conversion_bids_prorata_obj}.

The relaxed problem is to maximise the Lagrangian function subject to constraints  \eqref{model_5-2:Part0}  - \eqref{model:conversion_bids_prorata_end}. It takes the following form: 
\noindent
\begin{equation}
    \phi(\sigma_{co}^{g\rightarrow e}, \sigma_{co}^{g\rightarrow h}) = \max_{0 \leq x \leq 1} \Big\{\Lagr(x, \sigma_{co}^{g\rightarrow e}, \sigma_{co}^{g\rightarrow h}),  \text{ s.t., } \text{ \eqref{model_5-2:Part0} - \eqref{model:conversion_bids_prorata_end}}\Big\}, \label{Lagrangian-relax-1}
\end{equation}
and the Lagrangian dual problem is
\noindent
\begin{equation}
    \min_{\sigma_{co}^{g\rightarrow e}, \sigma_{co}^{g\rightarrow h}}\max_{0 \leq x \leq 1} \Big\{\Lagr(x, \sigma_{co}^{g\rightarrow e}, \sigma_{co}^{g\rightarrow h}), \text{ s.t., } \text{ \eqref{model_5-2:Part0} - \eqref{model:conversion_bids_prorata_end}}\Big\}.    \label{Lagrangian-dual-1}
\end{equation}

One can observe that the right-hand side of \eqref{Lagrangian-relax-1}, where the Lagrangian multipliers $\sigma_{co}^{{g\rightarrow e}^{(k)}}, \sigma_{co}^{{g\rightarrow h}^{(k)}}$ are fixed parameters, can be split in optimisation problems per carrier market as follows:
\noindent
\small
\begin{subequations}
    \begin{align}
    \label{model52_decompoese_gas}
        &\max_{ \substack{x^{g}_{o}, x_{co,g}^{g\rightarrow e}\\ x_{co,g}^{g\rightarrow h}}} \Big\{ \Lagr^{g} + \sum_{co \in \mathcal{O}^{g\rightarrow e}} \sigma_{co}^{{g\rightarrow e}^{(k)}} x_{co,g}^{g\rightarrow e} + \sum_{co \in \mathcal{O}^{g\rightarrow h}} \sigma_{co}^{{g\rightarrow h}^{(k)}} x_{co,g}^{g\rightarrow h},~ \text{s.t.} \quad
        0 \leq x^{g}_{o}, x_{co,g} ^{g\rightarrow e}, x_{co,g}^{g\rightarrow h}\leq 1,  \eqref{model_5-2:Part0} \Big\}, \\
    \label{model52_decompoese_electricity}
        &\max_{x^{e}_{o}, x_{co,e}^{g\rightarrow e}} \Big\{\Lagr^{e} - \sum_{co \in \mathcal{O}^{g\rightarrow e}} \sigma_{co}^{{g\rightarrow e}^{(k)}} x_{co,e}^{g\rightarrow e},~ \text{ s.t. }  0 \leq x^{e}_{o}, x_{co,e}^{g\rightarrow e} \leq 1,  \eqref{model_5-2:electricity-balance}  \Big\}, \\
    \label{model52_decompoese_heat}
       & \max_{x^{h}_{o}, x_{co,h}^{g\rightarrow h}} \Big\{\Lagr^{h} - \sum_{co \in \mathcal{O}^{g\rightarrow h}_{co}} \sigma_{co}^{{g\rightarrow h}^{(k)}} x_{co,h}^{g\rightarrow h},~  \text{ s.t.}  \quad 0 \leq x^{h}_{o}, x_{co,h}^{g\rightarrow h} \leq 1,  \eqref{model_5-2:Part1} \Big\}, 
    \end{align}
\end{subequations} 
where
\begin{subequations}
    \begin{align}
        &\Lagr^{g} := \sum_{o \in \mathcal{O}^{g}} P^{g}_{o} Q^{g}_{o} x^{g}_{o},  \\
        &\Lagr^{e} := \sum_{o \in \mathcal{O}^{e}} P^{e}_{o} Q^{e}_{o} x^{e}_{o} - \sum_{co \in \mathcal{O}^{g\rightarrow e}} P_{co}^{g\rightarrow e}Q_{co}^{g\rightarrow e} x_{co, e}^{g\rightarrow e},\\
        &\Lagr^{h} :=\sum_{o \in \mathcal{O}^{h}} P^{h}_{o} Q^{h}_{o} x^{h}_{o} - \sum_{co \in \mathcal{O}^{g\rightarrow h}} P_{co}^{g\rightarrow h}Q_{co}^{g\rightarrow h} x_{co, h}^{g\rightarrow h}.
    \end{align}
\end{subequations}

It is well-known that strong duality holds for linear programs, that is we have:
\begin{align}
\label{model52:stepsize}
 \phi(\sigma_{c}^{{g\rightarrow e}^{(*)}}, \sigma_{co}^{{g\rightarrow h}^{(*)}}) = \Omega(x^{*}),
\end{align}
where $x^*$ solves \eqref{model_5-2:conversion_bids_prorata_obj} and $\sigma_{c}^{{g\rightarrow e}^{*}}, \sigma_{co}^{{g\rightarrow h}^{*}}$ solves \eqref{Lagrangian-dual-1}.

Note that problems \eqref{model52_decompoese_gas}, and \eqref{model52_decompoese_electricity} \eqref{model52_decompoese_heat} respectively present the gas, electricity and heat market clearing that will be cleared independently although iteratively, with some information exchange between the MOs as  described next.

Consider the following update rule for the multipliers \cite{anstreicher2009two}:
\begin{subequations}
    \begin{align}
        & \sigma_{co}^{{g\rightarrow e}^{(k+1)}} = \sigma_{co}^{{g\rightarrow e}^{(k)}} + \mu_{k} \Big(x_{co,g}^{{g\rightarrow e}^{(k)}} -  x_{co,e}^{{g\rightarrow e}^{(k)}}   \Big),\label{multipliers-update-1} \\
        & \sigma_{co}^{{g\rightarrow h}^{(k+1)}} = \sigma_{co}^{{g\rightarrow h}^{(k)}} + \mu_{k} \Big(x_{co,g}^{{g\rightarrow h}^{(k)}} -  x_{co,h}^{{g\rightarrow h}^{(k)}}  \Big), \label{multipliers-update-2}
    \end{align}

where $\mu_{k} \geq 0$ denotes a step-length sequence which satisfies the following three conditions:
    \begin{align}
        &\{\mu_{k} | \lim_{k\rightarrow +\infty} \mu_{k} = 0  \ \text{and} \  \sum_{k=1}^{\infty} \mu_{k} = \infty \ \text{and} \  \sum_{k=1}^{\infty} \mu_{k}^{2} < \infty \label{steplength-cond-3} \}.
    \end{align}
\end{subequations}

\begin{proposition}[Direct application of Theorem 2 in \cite{anstreicher2009two}]\label{proposition-LD-MD5.2}
Consider the iterative process where at each iteration $k$, the  $x_{co,g}^{{g\rightarrow e}^{(k)}}, x_{co,e}^{{g\rightarrow e}^{(k)}}, x_{co,g}^{{g\rightarrow h}^{(k)}}, x_{co,h}^{{g\rightarrow h}^{(k)}}$ are obtained by solving \eqref{model52_decompoese_gas}- \eqref{model52_decompoese_heat}, and the multipliers $\sigma_{c}^{{g\rightarrow e}^{k}}, \sigma_{co}^{{g\rightarrow h}^{k}}$ are updated according to \eqref{multipliers-update-1} and \eqref{multipliers-update-2} with the sequence $\mu_{k}$ satisfying \eqref{steplength-cond-3}. Then, $\sigma_{c}^{{g\rightarrow e}^{(k)}}\rightarrow \sigma_{c}^{{g\rightarrow e}^{*}}$ and  $\sigma_{co}^{{g\rightarrow h}^{(k)}} \rightarrow \sigma_{co}^{{g\rightarrow h}^{*}}$.
\end{proposition}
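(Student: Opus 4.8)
The plan is to read the statement as a verification that the iteration \eqref{multipliers-update-1}--\eqref{multipliers-update-2} is exactly the standard subgradient method applied to the convex dual \eqref{Lagrangian-dual-1}, so that Theorem 2 of \cite{anstreicher2009two} applies verbatim. First I would record the structural properties of the dual function $\phi$ defined in \eqref{Lagrangian-relax-1}. For each fixed $(\sigma_{co}^{g\rightarrow e},\sigma_{co}^{g\rightarrow h})$ the inner maximisation is a linear program over the nonempty compact polytope cut out by \eqref{model_5-2:Part0}--\eqref{model:conversion_bids_prorata_end} (in particular $0\le x\le 1$), so $\phi$ is finite-valued everywhere, and being a pointwise maximum of functions affine in $\sigma$ it is convex and piecewise-linear. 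Thus \eqref{Lagrangian-dual-1} is a convex minimisation whose optimal set is nonempty, the latter because strong duality holds for the underlying linear program, as already recorded in \eqref{model52:stepsize}.

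Next I would show that the update directions in \eqref{multipliers-update-1}--\eqref{multipliers-update-2} are subgradients. For any maximiser $x^{(k)}$ of the Lagrangian \eqref{model_5-2:LR_objective} at the current multipliers, the vector with components $x_{co,g}^{{g\rightarrow e}^{(k)}}-x_{co,e}^{{g\rightarrow e}^{(k)}}$ and $x_{co,g}^{{g\rightarrow h}^{(k)}}-x_{co,h}^{{g\rightarrow h}^{(k)}}$ equals the gradient in $\sigma$ of the affine piece of $\Lagr$ that is active at $x^{(k)}$; by the standard subdifferential formula for a pointwise maximum, this residual vector is, up to the sign fixed by the minimisation convention, a subgradient of $\phi$ at $(\sigma_{co}^{{g\rightarrow e}^{(k)}},\sigma_{co}^{{g\rightarrow h}^{(k)}})$. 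Consequently \eqref{multipliers-update-1}--\eqref{multipliers-update-2} is precisely a subgradient iteration for the convex dual \eqref{Lagrangian-dual-1}, with the residuals of the dualised coupling equalities \eqref{model_5-2:pro-rata_ge}--\eqref{model_5-2:pro-rata_gh} playing the role of the subgradient.

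Finally I would verify the remaining hypotheses and invoke the cited result. Since every acceptance ratio is confined to $[0,1]$, each residual lies in $[-1,1]$, so the generated subgradients are uniformly bounded; the step-lengths satisfy the divergent-series and square-summability conditions \eqref{steplength-cond-3}. These are exactly the assumptions of Theorem 2 in \cite{anstreicher2009two}, whose conclusion yields the claimed convergence $\sigma_{co}^{{g\rightarrow e}^{(k)}}\rightarrow\sigma_{co}^{{g\rightarrow e}^{*}}$ and $\sigma_{co}^{{g\rightarrow h}^{(k)}}\rightarrow\sigma_{co}^{{g\rightarrow h}^{*}}$. The step I expect to be the genuine obstacle --- and the reason the cited theorem is needed rather than a one-line appeal to classical subgradient bounds --- is upgrading from convergence of the dual objective values, or of a mere subsequence, to convergence of the \emph{entire} multiplier sequence to a single optimal point. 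This is obtained from a Fej\'er-type estimate of $\|\sigma^{(k)}-\sigma^{*}\|^2$ in which $\sum_k\mu_k=\infty$ drives the iterates into the dual optimal set while $\sum_k\mu_k^2<\infty$, together with the uniform bound on the subgradients, keeps the accumulated overshoot summable; when the dual optimum is not unique, $\sigma^{*}$ must be read as the particular optimal point thereby selected.
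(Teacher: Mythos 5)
Your proposal is correct and takes essentially the same route as the paper: the paper offers no proof beyond the bracketed annotation ``Direct application of Theorem~2 in \cite{anstreicher2009two}'', and your argument is precisely the verification that this citation requires --- the update is a subgradient step on the convex, piecewise-linear Lagrangian dual \eqref{Lagrangian-dual-1}, the subgradients (coupling-constraint residuals) are uniformly bounded because all acceptance ratios lie in $[0,1]$, the dual optimal set is nonempty by LP strong duality, and the step sizes satisfy \eqref{steplength-cond-3}. Your closing remark that the cited theorem's real content is convergence of the whole multiplier sequence to a single (possibly non-unique) dual optimum, via a Fej\'er-type estimate, is exactly the right reading of why the reference is invoked.
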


Regarding primal convergence, a well-known issue is that on the other side, the 'primal iterates' $x_{co,g}^{{g\rightarrow e}^{(k)}}$, $x_{co,e}^{{g\rightarrow e}^{(k)}}$, $x_{co,g}^{{g\rightarrow h}^{(k)}}$, $x_{co,h}^{{g\rightarrow h}^{(k)}}$ (together with the other values to form the iterates $x^k$ solutions of \eqref{model52_decompoese_gas}- \eqref{model52_decompoese_heat}) do not necessarily converge to an optimal solution of \eqref{model_5-2:conversion_bids_prorata_obj}. However, one can address this issue by adapting the iterates to build the following averaged iterates (see \cite{anstreicher2009two,sherali1996recovery}).
Let us consider $\mu_{k} = \frac{1}{k}$ as a particular step-length sequence satisfying \eqref{steplength-cond-3}, with 
\begin{equation}
\overline{x}^{(k)} = \frac{1}{k} \sum_{i=1}^k x^{(i)} \label{averaged-iterates}
\end{equation}
as the averaged iterates obtained from the $x^{(k)}$.

A direct application of Theorem 6 in \cite{anstreicher2009two} gives:
\begin{proposition}
\label{proposition-LD-MD5.2-b}
If in Proposition \ref{proposition-LD-MD5.2} the step-length is $\mu_{k} = \frac{1}{k}$, then any accumulation point of the sequence $\overline{x}^{(k)}$ obtained following the rule \eqref{averaged-iterates} is an optimal solution of \eqref{model_5-2:conversion_bids_prorata_obj}-\eqref{model:conversion_bids_prorata_end}, and hence, provides an optimal solution to \eqref{MD53:conversion_bids_model_obj_gen} - \eqref{model523:conversion_bids_model_eq_end}.

\end{proposition}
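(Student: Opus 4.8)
The plan is to read Proposition~\ref{proposition-LD-MD5.2-b} as a direct instance of the classical primal-recovery result for subgradient methods applied to linear programs, so that the work reduces to checking that the hypotheses of that result hold for our data. First I would make explicit that the dual function $\phi(\sigma_{co}^{g\rightarrow e},\sigma_{co}^{g\rightarrow h})$ of \eqref{Lagrangian-relax-1} is a pointwise maximum of functions that are affine in the multipliers, hence convex, and that the vector of residuals of the relaxed coupling constraints $(x_{co,g}^{g\rightarrow e}-x_{co,e}^{g\rightarrow e},\; x_{co,g}^{g\rightarrow h}-x_{co,h}^{g\rightarrow h})$, evaluated at any Lagrangian maximizer $x^{(k)}$, is a subgradient of $\phi$ at $\sigma^{(k)}$. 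The multiplier update \eqref{multipliers-update-1}-\eqref{multipliers-update-2} is then exactly the subgradient step for the convex dual problem \eqref{Lagrangian-dual-1}, so the whole scheme is a genuine subgradient method and the framework of \cite{anstreicher2009two} applies verbatim.

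Next I would verify the three ingredients that the cited theorem requires. For compactness, note that the decomposed subproblems \eqref{model52_decompoese_gas}-\eqref{model52_decompoese_heat} maximize a linear objective over the bounded polyhedron cut out by the box constraints $0\le x\le 1$ and the balance equalities, so every iterate $x^{(k)}$ lies in a fixed compact set; consequently the averaged iterates $\overline{x}^{(k)}$ of \eqref{averaged-iterates} also lie in that compact set, and accumulation points exist. For strong duality, the equivalence recorded in \eqref{model52:stepsize} already guarantees that the optimal dual value equals the optimal primal value of \eqref{model_5-2:conversion_bids_prorata_obj}-\eqref{model:conversion_bids_prorata_end}. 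Finally, the step-size choice $\mu_k=\tfrac1k$ satisfies conditions \eqref{steplength-cond-3}, since $\tfrac1k\rightarrow 0$, $\sum_k \tfrac1k=\infty$, and $\sum_k \tfrac1{k^2}<\infty$.

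With these in place I would invoke Theorem~6 of \cite{anstreicher2009two}, which states that under exactly these conditions the averaged primal iterates are asymptotically feasible and asymptotically optimal: the residuals of the dualized coupling constraints \eqref{model_5-2:pro-rata_ge}-\eqref{model_5-2:pro-rata_gh} evaluated at $\overline{x}^{(k)}$ tend to zero, and the objective value $\Omega(\overline{x}^{(k)})$ tends to the optimal value. Passing to the limit along any convergent subsequence, the limit point satisfies the coupling constraints (recovered in the limit) while the remaining constraints of \eqref{model_5-2:conversion_bids_prorata_obj}-\eqref{model:conversion_bids_prorata_end}, being linear, hold for every $x^{(k)}$ and are therefore inherited by the convex averages and their limits; hence the accumulation point is feasible and attains the optimal objective, i.e. it is optimal for the reformulated problem. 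To close, I would translate this back to the centralised clearing: since the reformulation differs from \eqref{MD53:conversion_bids_model_obj_gen}-\eqref{model523:conversion_bids_model_eq_end} only by duplicating the conversion variables and imposing \eqref{model_5-2:pro-rata_ge}-\eqref{model_5-2:pro-rata_gh}, any optimal limit point has $x_{co,g}^{g\rightarrow e}=x_{co,e}^{g\rightarrow e}$ and $x_{co,g}^{g\rightarrow h}=x_{co,h}^{g\rightarrow h}$, and setting $x_{co}^{g\rightarrow e}$, $x_{co}^{g\rightarrow h}$ to these common values yields a feasible point of \eqref{MD53:conversion_bids_model_obj_gen}-\eqref{model523:conversion_bids_model_eq_end} with the same objective, hence optimal there as well.

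The main obstacle is the feasibility-in-the-limit step: the averaging only drives the \emph{dualized} residuals to zero asymptotically, so feasibility of an accumulation point for the coupling constraints is obtained only in the limit and genuinely relies on closedness and boundedness of the feasible region for the other constraints. The remaining care is purely in matching the exact hypotheses of Theorem~6 to our situation, above all that the primal iterates are produced as \emph{exact} maximizers of the Lagrangian over a bounded polyhedron (so that the subgradients are well defined and the iterates stay compact) and that the averaging rule \eqref{averaged-iterates} is the one to which the theorem's conclusion attaches; once this identification is made, the result follows directly.
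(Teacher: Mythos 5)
Your proposal is correct and takes essentially the same approach as the paper: the paper proves this proposition simply by declaring it a ``direct application of Theorem 6 in \cite{anstreicher2009two}'', which is precisely the result you invoke. Your additional work---verifying convexity of the dual, identifying the residuals as subgradients, checking compactness, step-size conditions \eqref{steplength-cond-3}, strong duality via \eqref{model52:stepsize}, and translating back to \eqref{MD53:conversion_bids_model_obj_gen}--\eqref{model523:conversion_bids_model_eq_end} through the duplicated-variable equivalence---just makes explicit the hypotheses the paper leaves implicit.
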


\paragraph{Information exchange in decentralised multi-carrier market clearing with a single market operator percarrier  market}

We analyse here the exchanges of information between the different energy carrier MOs that are required to execute the decentralised clearing algorithm just described.

The information exchange should be coordinated such that in every iteration $k$, the dual variables $\sigma_{co}^{{g\rightarrow e}^{k}}, \sigma_{co}^{{g\rightarrow h}^{k}}$ associated with the coupling equations \eqref{model_5-2:pro-rata_ge}, \eqref{model_5-2:pro-rata_gh} are communicated among the carrier market clearing agents (i.e., the sub-problems). To update these multipliers, one also needs to exchange the values of $x_{co,g}^{{g\rightarrow e}^{(k)}}, x_{co,e}^{{g\rightarrow e}^{(k)}}, x_{co,g}^{{g\rightarrow h}^{(k)}}, x_{co,h}^{{g\rightarrow h}^{(k)}}$.
 
Let us note that the clearing algorithm admits an economic interpretation where ``dummy demand gas orders" related to the variables $x_{co,g}^{{g\rightarrow e}^{}}, x_{co,g}^{{g\rightarrow h}^{}}$ are placed in the gas market, see the Lagrangian sub-problem \eqref{model52_decompoese_gas}. On the other side, ``dummy supply orders" are placed in the electricity and heat markets, respectively corresponding to the variables $x_{co,e}^{{g\rightarrow e}^{}}$ and $x_{co,h}^{{g\rightarrow h}^{}}$ in the Lagrangian sub-problems \eqref{model52_decompoese_electricity} and  \eqref{model52_decompoese_heat}. The market operators iteratively update the acceptance levels of these dummy orders until they reach a consensus on accepted volumes. Parallels could be made with consensus problems solved in a distributed way via ADMM, see Section 7 in \cite{boyd2011distributed}. 

The above analysis demonstrates that if an adequate information exchange is in place among different carrier market operators, and also if the stopping criteria and the step sizes are set adequately, a decentralised algorithm like this can in theory give optimal solutions to the same welfare optimisation problem, that could be otherwise obtained by directly solving \eqref{MD53:conversion_bids_model_obj_gen} - \eqref{model523:conversion_bids_model_eq_end} and its dual to get the market prices (note that optimal dual variables can be obtained for free with the simplex method or the interior-point methods implemented in commercial solvers). 

Note that  multiple optimal solutions to \eqref{MD53:conversion_bids_model_obj_gen} - \eqref{model523:conversion_bids_model_eq_end} could exist: the simplest example is given by a case without conversion orders, where supply and demand curves in a given carrier market overlap horizontally (orders with same prices), in which case multiple volume acceptances could be selected while providing the same total welfare (situation of ``volume indeterminacy").  In such cases, it is not a priori  guaranteed that the decentralised algorithm and the centralised algorithm will provide the exact same optimal solutions, but they will be equivalent in the sense that both will be welfare optimal and will correspond to a market equilibrium. Incorporating mechanisms to lift indeterminacies in the cleared volumes or in the market prices seem on another hand to require a centralised clearing or at least, in a decentralised setting, more advanced coordination mechanisms.

\textcolor{black}{Let us observe that market clearing prices are not explicitly available once the market clearing algorithm converged. However, further coordination among MOs enables to recover them, for example by exchanging information on bounds on prices in each carrier market (based on the acceptance of elementary orders and without revealing details about them) and by taking into account the shared information on conversion orders. In that case, each carrier market can easily solve on its side the same pricing problem where prices are variables constrained to be within the bounds communicated by each MO, and satisfy conditions ensuring that they are consistent with the acceptance of the conversion orders, knowing that data on conversion orders is accessible to all MOs.} If a further coordination among MOs to recover the prices is infeasible, another decentralised multi-carrier market clearing with an additional conversion order market operator next to the single market operators per carrier should be considered, as explained next. 

\subsubsection{Decentralised multi-carrier market clearing: a single market operator per carrier market and an additional conversion order market operator (MC3)}
\label{section:md5.3}

We consider here a second approach for decentralised multi-carrier market clearing \eqref{MD53:conversion_bids_model_obj_gen} - \eqref{model523:conversion_bids_model_eq_end}, in which we dualize the balance constraints \eqref{md523:power_balance_g}-\eqref{md523:power_balance_h}.

For fixed Lagrangian multipliers $\pi^g, \pi^e, \pi^h$, the Lagrangian relaxation obtained is:
    \begin{align}
        \phi(\pi^g, \pi^e, \pi^h) := \max_{0 \leq x \leq 1} \Lagr(x, \pi^g, \pi^e, \pi^h) = \Theta(x) &- \pi^g \Big(\sum_{o \in \mathcal{O}^{g}} Q^{g}_{o} x^{g}_{o} +  \sum_{co \in \mathcal{O}^{g\rightarrow e}} Q_{co}^{g\rightarrow e} x_{co}^{g\rightarrow e} +  \sum_{co \in \mathcal{O}^{g\rightarrow h}} Q_{co}^{g\rightarrow h} x_{co}^{g\rightarrow h}\Big) \notag \\
        &- \pi^e \Big( \sum_{o \in \mathcal{O}^{e}} Q^{e}_{o} x^{e}_{o} 
    -  \sum_{co \in \mathcal{O}^{g\rightarrow e}} \eta_{co}^{g\rightarrow e} Q_{co}^{g\rightarrow e} x_{co}^{g\rightarrow e}  \Big) \notag \\
        &- \pi^h \Big( \sum_{{o \in \mathcal{O}^{h}}} Q^{h}_{o} x^{h}_{o} -  \sum_{co \in \mathcal{O}^{g\rightarrow h}} \eta_{co}^{g\rightarrow h}Q_{co}^{g\rightarrow h} x_{co}^{g\rightarrow h}  \Big) , 
        \label{Lagrangian-relax-2}
    \end{align} 

where $\Theta(x)$ is the objective function in \eqref{MD53:conversion_bids_model_obj_gen}.

The Lagrangian dual to solve is
\begin{equation}
\min_{\pi^g, \pi^e, \pi^h}  \phi(\pi^g, \pi^e, \pi^h). \label{Lagrangian-dual-2}
\end{equation}

Similar to the discussion in the first decentralised clearing, at each iteration $k$, for given multipliers, $\pi^{g^k}$, $\pi^{e^k}$, $\pi^{h^k}$, the Lagrangian Relaxation problem to solve can be decomposed into separate sub-problems per energy market carrier and one last sub-problem of a ``conversion market operator (agent)" handling the sub-problem involving the conversion orders, i.e. conversion order variables $x_{co}^{g\rightarrow e}, x_{co}^{g\rightarrow h}$: 
\begin{subequations}
    \begin{align}
    \label{model53:decomposed_g}
        &\max_{x_o^g} \{\Lagr^g(x_o^g, \pi^{g^k}) | 0 \leq x_o^g \leq 1 \},\\
    \label{model53:decomposed_e}
        &\max_{x_o^e} \{\Lagr^e(x_o^e, \pi^{e^k})| 0 \leq x_o^e \leq 1 \}, \\
    \label{model53:decomposed_h}
        &\max_{x_o^h} \{\Lagr^h(x_o^h, \pi^{h^k}) | 0 \leq x_o^h \leq 1 \}, \\
    \label{model53:decomposed_con}
        & \max_{x_{co}^{g\rightarrow e}, x_{co}^{g\rightarrow h}} \{\Lagr^{g\rightarrow eh}( x_{co}^{g\rightarrow e}, x_{co}^{g\rightarrow h}, \pi^{g^k}, \pi^{e^k}, \pi^{h^k}  ) | 0 \leq x_{co}^{g\rightarrow e}, x_{co}^{g\rightarrow h} \leq 1  \} ,
    \end{align}
\end{subequations}
where 

\begin{subequations}
    \begin{align}
        &\Lagr^g(x_o^g, \pi^{g^k}) = \sum_{o \in \mathcal{O}^{g}} P^{g}_{o} Q^{g}_{o} x_{o}  - \pi^{g^k} \Big( \sum_{o \in \mathcal{O}^{g}} Q^{g}_{o} x^{g}_{o} \Big), \\
        &\Lagr^e(x_o^e, \pi^{e^k}) =  \sum_{o \in \mathcal{O}^{e}} P^{e}_{o} Q^{e}_{o} x_{o} - \pi^{e^k} \Big( \sum_{o \in \mathcal{O}^{e}} Q^{e}_{o} x^{e}_{o} \Big),
        \\
        &\Lagr^h(x_o^h, \pi^{h^k}) = \sum_{o \in \mathcal{O}^{h}} P^{h}_{o} Q^{h}_{o} x_{o} - \pi^{h^k} \Big( \sum_{{o \in \mathcal{O}^{h}}} Q^{h}_{o} x^{h}_{o} \Big),
    \end{align}  
    
    \begin{align}
    \label{model53:conversion_market_formula_obj}
        \Lagr^{g\rightarrow eh}( x_{co}^{g\rightarrow e}, x_{co}^{g\rightarrow h}, \pi^{g^k}, \pi^{e^k}, \pi^{h^k}  ) = &- \sum_{co \in \mathcal{O}^{g\rightarrow e}} P_{co}^{g\rightarrow e} Q_{co}^{g\rightarrow e} x_{co}^{g\rightarrow e} - \sum_{co \in \mathcal{O}^{g\rightarrow h}} P_{co}^{g\rightarrow h} Q_{co}^{g\rightarrow h} x_{co}^{g\rightarrow h} \notag \\
        & - \pi^{g^k} \Big( \sum_{co \in \mathcal{O}^{g\rightarrow e}} Q_{co}^{g\rightarrow e} x_{co}^{g\rightarrow e} +  \sum_{co \in \mathcal{O}^{g\rightarrow h}} Q_{co}^{g\rightarrow h} x_{co}^{g\rightarrow h}\Big) \notag \\
        & +\pi^{e^k} \Big( \sum_{co \in \mathcal{O}^{g\rightarrow e}} \eta_{co}^{g\rightarrow e} Q_{co}^{g\rightarrow e} x_{co}^{g\rightarrow e} \Big)          \notag \\
        &+ \pi^{h^k} \Big( \sum_{co \in \mathcal{O}^{g\rightarrow h}} \eta_{co}^{g\rightarrow h}Q_{co}^{g\rightarrow h} x_{co}^{g\rightarrow h} \Big).
    \end{align}
\end{subequations}

Consider now the following update rule for the multipliers, where again the $\mu_{k}$ satisfy the conditions \eqref{steplength-cond-3}:
\begin{subequations}
     \begin{align}
        & \pi^{{g}^{(k+1)}} = \pi^{{g}^{(k)}} + \mu_{k} \Big(\sum_{o \in \mathcal{O}^{g}} Q^{g}_{o} x^{{g}^{(k)}}_{o} +  \sum_{co \in \mathcal{O}^{g\rightarrow e}} Q_{co}^{g\rightarrow e} x_{co}^{g\rightarrow e^{(k)}} +  \sum_{co \in \mathcal{O}^{g\rightarrow h}} Q_{co}^{g\rightarrow h} x_{co}^{g\rightarrow h^{(k)}}  \Big),\label{multipliers-MD53-update-1} \\
        & \pi^{{e}^{(k+1)}} = \pi^{{e}^{(k)}} + \mu_{k} \Big(\sum_{o \in \mathcal{O}^{e}} Q^{e}_{o} x^{{e}^{(k)}}_{o} -  \sum_{co \in \mathcal{O}^{g\rightarrow e}} Q_{co}^{g\rightarrow e} x_{co}^{g\rightarrow e^{(k)}}\Big), \label{multipliers-MD53-update-2} \\
        & \pi^{{h}^{(k+1)}} = \pi^{{h}^{(k)}} + \mu_{k} \Big(\sum_{o \in \mathcal{O}^{h}} Q^{h}_{o} x^{{h}^{(k)}}_{o} -  \sum_{co \in \mathcal{O}^{g\rightarrow h}} Q_{co}^{g\rightarrow h} x_{co}^{g\rightarrow h^{(k)}}\Big). \label{multipliers-MD53-update-3}
    \end{align}
\end{subequations}

We then have the analogues to Proposition \ref{proposition-LD-MD5.2} and Proposition \ref{proposition-LD-MD5.2-b} for the first decentralised clearing algorithm in the previous section:
\begin{proposition}\label{proposition-LD-MD5.3}
Consider the iterative process where at each iteration $k$, the $x^{{g}^{(k)}}_{o}$, $x^{{e}^{(k)}}_{o}$, $x^{{h}^{(k)}}_{o}$, $x_{co}^{g\rightarrow e^{(k)}}$, $x_{co}^{g\rightarrow h^{(k)}}$  are obtained by solving \eqref{model53:decomposed_g}-\eqref{model53:decomposed_con}, and the multipliers $(\pi^{g^k}, \pi^{e^k}, \pi^{h^k} )$ are updated according to \eqref{multipliers-MD53-update-1}-\eqref{multipliers-MD53-update-3} with the sequence $\mu_{k}$ satisfying \eqref{steplength-cond-3}. Then, $(\pi^{g^k}, \pi^{e^k}, \pi^{h^k} ) \rightarrow (\pi^{g^*}, \pi^{e^*}, \pi^{h^*} )$.
\end{proposition}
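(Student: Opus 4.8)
The plan is to recognise that Proposition \ref{proposition-LD-MD5.3} has exactly the same structure as Proposition \ref{proposition-LD-MD5.2}: it asserts dual convergence of a subgradient method applied to the Lagrangian dual of a linear program, the only difference being \emph{which} constraints are dualised (here the balance constraints \eqref{md523:power_balance_g}--\eqref{md523:power_balance_h} rather than the coupling constraints \eqref{model_5-2:pro-rata_ge}--\eqref{model_5-2:pro-rata_gh}). Accordingly, I would again invoke Theorem 2 of \cite{anstreicher2009two}, and the bulk of the proof is checking that its hypotheses are met for the present relaxation.

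First I would establish the structural properties of the dual function. For fixed $x$ with $0 \le x \le 1$, the Lagrangian $\Lagr(x, \pi^g, \pi^e, \pi^h)$ in \eqref{Lagrangian-relax-2} is affine in $(\pi^g, \pi^e, \pi^h)$; taking the pointwise maximum over the compact box $\{0 \le x \le 1\}$ therefore yields a finite, convex, piecewise-linear function $\phi$. Finiteness everywhere follows because in this section only elementary and conversion orders are present, so every decision variable is bounded in $[0,1]$; moreover $x = 0$ is primal feasible for \eqref{MD53:conversion_bids_model_obj_gen}--\eqref{model523:conversion_bids_model_eq_end} and the objective is bounded above on the box, so the primal LP has an optimum and, by LP strong duality, so does its dual. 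In particular $\phi$ attains its minimum at some optimal $(\pi^{g^*}, \pi^{e^*}, \pi^{h^*})$.

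Next I would identify the update \eqref{multipliers-MD53-update-1}--\eqref{multipliers-MD53-update-3} as a genuine subgradient step for $\min_\pi \phi$. Letting $x^{(k)}$ be any maximiser returned by the decomposed subproblems \eqref{model53:decomposed_g}--\eqref{model53:decomposed_con}, the standard rule for the subdifferential of a max-function gives that $\nabla_\pi \Lagr(x^{(k)}, \pi^{(k)})$ is a subgradient of $\phi$ at $\pi^{(k)}$; reading off the partial derivatives of \eqref{Lagrangian-relax-2} with respect to $\pi^g, \pi^e, \pi^h$ shows that this subgradient is exactly minus the vector of balance-constraint residuals appearing in \eqref{multipliers-MD53-update-1}--\eqref{multipliers-MD53-update-3}. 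Hence the update $\pi^{(k+1)} = \pi^{(k)} + \mu_k (\text{residuals})$ is precisely the descent step $\pi^{(k+1)} = \pi^{(k)} - \mu_k g_k$ with $g_k \in \partial \phi(\pi^{(k)})$. Because the residuals are fixed affine combinations of $x^{(k)} \in [0,1]$, the subgradients $g_k$ are uniformly bounded.

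With $\phi$ convex and piecewise-linear, the subgradients bounded, and the step-lengths $\mu_k$ satisfying the divergent-series and square-summability conditions \eqref{steplength-cond-3}, all hypotheses of Theorem 2 of \cite{anstreicher2009two} hold, and it yields $(\pi^{g^k}, \pi^{e^k}, \pi^{h^k}) \to (\pi^{g^*}, \pi^{e^*}, \pi^{h^*})$. The one point requiring care --- and the main obstacle --- is convergence of the \emph{iterates} rather than merely of the dual objective values: the LP dual optimum need not be unique and the subproblem maximisers $x^{(k)}$ need not be unique, so no smooth-gradient argument is available. This is exactly the content supplied by the cited theorem, whose proof for linear programs combines $\sum_k \mu_k^2 < \infty$ with boundedness of the subgradients to control $\|\pi^{(k)} - \pi^*\|^2$ and force convergence to a specific dual optimiser; the verification above merely ensures that our relaxation falls within its scope.
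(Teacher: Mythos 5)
Your proof is correct and takes essentially the same route as the paper: the paper offers no separate argument for this proposition, presenting it as the analogue of Proposition \ref{proposition-LD-MD5.2}, i.e., a direct application of Theorem 2 in \cite{anstreicher2009two} to the relaxation \eqref{Lagrangian-relax-2}--\eqref{Lagrangian-dual-2}. Your verification of the hypotheses (finiteness, convexity and piecewise linearity of $\phi$; identification of \eqref{multipliers-MD53-update-1}--\eqref{multipliers-MD53-update-3} as a subgradient step with uniformly bounded subgradients given $x \in [0,1]$; the step-length conditions \eqref{steplength-cond-3}) is exactly the routine checking that the paper leaves implicit.
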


\begin{proposition}\label{proposition-LD-MD5.3-b}
If in Proposition \ref{proposition-LD-MD5.3} the step-length is $\mu_{k} = \frac{1}{k}$, then any accumulation point of the primal iterate sequence $\overline{x}^{(k)}$ obtained following the rule \eqref{averaged-iterates} is an optimal solution of \eqref{MD53:conversion_bids_model_obj_gen} - \eqref{model523:conversion_bids_model_eq_end}.
\end{proposition}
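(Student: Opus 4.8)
The plan is to proceed exactly as for Proposition~\ref{proposition-LD-MD5.2-b}, namely by recognising the MC3 iteration as an instance of the abstract primal-recovery scheme of Theorem~6 in \cite{anstreicher2009two} and invoking that theorem directly. First I would recast the centralised problem \eqref{MD53:conversion_bids_model_obj_gen}-\eqref{model523:conversion_bids_model_eq_end} in the generic form $\max\{c^\top x : Ax = b,\ x \in P\}$, where $c^\top x = \Theta(x)$, the equalities $Ax=b$ are the three balance constraints \eqref{md523:power_balance_g}-\eqref{md523:power_balance_h} (with $b=0$), and $P=\{x : 0 \le x \le 1\}$ is the unit box collecting the remaining constraints \eqref{model523:conversion_bids_model_eq4}-\eqref{model523:conversion_bids_model_eq_end}. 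I would then note that $P$ is a nonempty compact polytope and that $x=0$ is feasible for $Ax=b$, so the primal LP is feasible and bounded; hence strong duality holds, the dual optimum is attained, and the dual function $\phi(\pi^g,\pi^e,\pi^h)$ defined in \eqref{Lagrangian-relax-2} is a finite-valued convex function on all of $\mathbb{R}^3$.

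Second, I would confirm that the iteration is a bona fide subgradient method on the dual \eqref{Lagrangian-dual-2}. For fixed multipliers the inner problem $\max_{0\le x\le 1}\Lagr(x,\pi^g,\pi^e,\pi^h)$ is a linear program over the compact box, hence a maximiser exists; since the dualised balance constraints carry the only coupling terms, the Lagrangian separates additively and a maximiser is obtained by concatenating the solutions of the four independent sub-problems \eqref{model53:decomposed_g}-\eqref{model53:decomposed_con}. For any such maximiser $x^{(k)}$, the vector of balance residuals $Ax^{(k)}-b=Ax^{(k)}$ appearing on the right-hand sides of \eqref{multipliers-MD53-update-1}-\eqref{multipliers-MD53-update-3} is, up to the sign induced by the $-\pi^\top(Ax-b)$ terms in \eqref{Lagrangian-relax-2}, a subgradient of $\phi$ at $(\pi^{g^k},\pi^{e^k},\pi^{h^k})$. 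The updates \eqref{multipliers-MD53-update-1}-\eqref{multipliers-MD53-update-3} are therefore exactly a subgradient step on $\phi$, and Proposition~\ref{proposition-LD-MD5.3} already records that, under \eqref{steplength-cond-3}, the dual iterates converge to a dual optimum.

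Third, with the specific choice $\mu_k=\frac{1}{k}$ and the ergodic averages $\overline{x}^{(k)}=\frac{1}{k}\sum_{i=1}^k x^{(i)}$ of \eqref{averaged-iterates}, all the hypotheses of Theorem~6 of \cite{anstreicher2009two} are met: a bounded LP, a fixed block of equality constraints dualised, inner maximisers taken over a compact polytope, a harmonic step-length, and uniform ergodic primal averaging. The theorem then yields that every accumulation point of $\overline{x}^{(k)}$ is an optimal solution of the primal LP, i.e. of \eqref{MD53:conversion_bids_model_obj_gen}-\eqref{model523:conversion_bids_model_eq_end}, which is the desired conclusion.

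The main obstacle to keep in view is the \emph{non-uniqueness} of the inner maximiser $x^{(k)}$: when the multipliers sit at a price where supply and demand overlap (the ``volume indeterminacy'' discussed after Proposition~\ref{proposition-LD-MD5.2-b}), the sub-problems \eqref{model53:decomposed_g}-\eqref{model53:decomposed_con} may return any point of an optimal face, so the raw iterates need not converge. The averaging in \eqref{averaged-iterates} is precisely what repairs this, but one must be careful that the claim is only about accumulation points of $\overline{x}^{(k)}$ rather than a genuine limit, and that the conclusion does not depend on selecting a particular maximiser. Both are guaranteed by Theorem~6, since it requires only that each $x^{(i)}$ be \emph{a} Lagrangian maximiser; I would also check the harmless point that $b=0$ leaves the residual-as-subgradient identity intact, the residual simply reducing to $Ax^{(k)}$.
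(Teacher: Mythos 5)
Your proposal is correct and follows essentially the same route as the paper, which justifies this proposition as the analogue of Proposition \ref{proposition-LD-MD5.2-b}, i.e.\ a direct application of Theorem 6 in \cite{anstreicher2009two} to the subgradient method on the Lagrangian dual \eqref{Lagrangian-dual-2} obtained by dualising the balance constraints \eqref{md523:power_balance_g}--\eqref{md523:power_balance_h}. Your additional verifications (compactness of the unit box, separability of the Lagrangian into \eqref{model53:decomposed_g}--\eqref{model53:decomposed_con}, the residual-as-subgradient identity, and the caveat about non-unique inner maximisers) are sound elaborations of what the paper leaves implicit.
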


\paragraph{Information exchange in decentralised market clearing with a single market operator per carrier market and an additional conversion order market operator}
The developments above show that the centralised market clearing problem can be decomposed into four sub-problems that are processed by four independent market operators (clearing agents): one per energy carrier, and one for conversion orders, which exchange a limited amount of information. 
Conversion orders are cleared in a separated trading platform, by a conversion order agent, which at each iteration solves the optimisation problems with conversion orders described by \eqref{model53:decomposed_con}, \eqref{model53:conversion_market_formula_obj}.

The LR problems sub-problems \eqref{model53:decomposed_g} - \eqref{model53:decomposed_con} are solved iteratively where at every iteration $k$, the updated Lagrangian multipliers $(\pi^{g^k}, \pi^{e^k}, \pi^{h^k} )$ are used. These sub-problems require that at each iteration, the involved agents (market operators) share information regarding the current values of $(\pi^{g^k}, \pi^{e^k}, \pi^{h^k} )$. %\cite{DunningHuchetteLubin2017}

To update these multipliers, according to \eqref{multipliers-MD53-update-1}-\eqref{multipliers-MD53-update-3}, each agent can add its contribution to the update of the $(\pi^{g^k}, \pi^{e^k}, \pi^{h^k})$ while keeping the detailed individual of values $x_o^{g^{(k)}}, x_o^{e^{(k)}}, x_o^{h^{(k)}}$ and $x_{co}^{g\rightarrow e^{(k)}}, x_{co}^{g\rightarrow h^{(k)}}$ private (note that each agent knows the index of the current iteration $k$ and hence $\mu_{k}$).

\subsection{Comparison of the market clearing algorithms}

We compare here the three proposed market clearing algorithms for integrated multi-carrier energy markets, MC1, MC2 and MC3 (cf. the section Notation and Abbreviations at the beginning of the paper for quick reference to the models involved).

Table \ref{table:comparing_MDs} summarises the characteristics of different market clearing algorithms, including some information on the computational complexity of each market clearing. Regarding the computational complexity, we refer to Chapter 15 in \cite{schrijver1998theory} for the polynomial complexity of linear programs, which corresponds to the complexity of MC1 (see also Chapter 2 therein for a definition of polynomial complexity). Note that as of today, no \emph{strongly} polynomial algorithm for linear programming is known \cite[Chapter 15]{schrijver1998theory}, but commercial solvers are able to efficiently solve very large-scale problems. Regarding the convergence rate of subgradient methods (used for MC2 and MC3), a detailed analysis is given in \cite{nesterov2018lectures}. Each iteration of MC2 and MC3 requires to solve a given number of linear programs, and therefore also has a complexity polynomial in the size of the instance. Let us highlight that in the context of MC3, solving the linear programs \eqref{model53:decomposed_g}-\eqref{model53:decomposed_con} can be very fast, since there is no balance constraint to satisfy, the only constraints are bounds on variables, and the optimal variable value for each order can quickly be be computed based on the current prices (value of the Lagrangian multipliers at a given iteration).

Despite the need for creating a common market operator for gas, heat and electricity carrier markets, MC1 has two advantages. 
First, it does not require an advanced algorithmic and IT cooperation among different market operators to operate the markets in a decentralised way. Second, it also enables to directly use state-of-the art commercial linear programming solvers to clear the markets, and also to implement algorithms to lift indeterminacies in accepted volumes or prices if any (in case of multiple optimal solutions to the volume matching primal, or pricing dual problems).

On another hand, the decentralised clearing MC2 is the closest to the current situation in terms of existing organisations and does not require to create a new entity for clearing the integrated energy carrier markets, such as a new central MO for all carrier markets as in MC1, or a new MO to handle conversion orders as in MC3. However, although MC2 is closer to the current market implementation in terms of involved organisations, this benefit would a priori be largely offset by the required evolution in systems and coordination required to clear the markets in such a decentralised way.

Let us note that the polynomial complexity for directly solving  MC1 is given in terms of the size of the instance to solve. On the other hand, the number of iterations to reach an optimal solution with precision $\epsilon$ with the subgradient method is given as a function of that precision. Each iteration requires to solve a linear program and therefore also has a polynomial complexity.  

\begin{table}[ht!]
\centering
\caption{Comparison of the Market Clearing Algorithms}
\label{table:comparing_MDs}
\begin{tabular}{||C{3cm}||p{4cm}|p{4cm}|p{4cm}||} 
 \hline \hline
   & \textbf{MC1} & \textbf{MC2} & \textbf{MC3}\\ 
 \hline\hline
    \textbf{Computational performance} &  - Number of operations polynomial  in the size of the market instance.
    & - $O(\frac{1}{\epsilon^2})$ iterations to solve the Lagrangian dual with precision $\epsilon$.  & - $O(\frac{1}{\epsilon^2})$ iterations to solve the Lagrangian dual (giving market prices) with precision $\epsilon$.    \\
     &  &  - Each iteration polynomial in the size of the  instance   & - Each iteration polynomial in the size of the  instance   \\    
\hline
    \textbf{Agents involved} & One central MO & Three MOs, one per carrier & Four MOs, one per carrier and one for conversion orders\\ 
\hline
   \textbf{Similarity with current European electricity market design} & Low & Intermediate & Low \\ 
\hline
   \textbf{Possibility to lift indeterminacies in volumes and prices}  & Yes & No & No \\ 
 \hline  \hline
\end{tabular}
\end{table}

To conclude, which market clearing procedure is superior among the three will depend on the  policy goals to be achieved, characteristics of the energy system, and to a certain extent the IT infrastructure at place. If the requirement is to preserve the current organisational structure of energy markets, with separate MOs for different carriers, then the market clearing  MC2 should be implemented in practice, despite the trade-offs related to the additional information exchange between them and additional coordination among the MOs to recover the clearing prices. If a minor change in the current organisational structure, in the form of adding an additional market operator to deal with the conversion orders, is acceptable, then market clearing MC3 should be implemented, as the market clearing prices can be directly retrieved from the  information exchanged among the market operators. Similarly, if the important to be able to resolve the price and volume indeterminancies in the market outcome, or to reduce the information exchange volume, market clearing MC1 should be implemented.

\section{Numerical Results}
\label{section:results}

Implementation of the market clearing is made in Julia (version 1.3.1) using the embedded algebraic modelling language package 'JuMP.jl'\cite{DunningHuchetteLubin2017}, version 0.21.2. As performances are not critical here, the underlying linear programming solvers used are GLPK \cite{GLPK} and Coin CLP \cite{CLP} (the second is known to be substantially faster).

We present here examples and numerical experiments documenting the added value of the novel orders, and of the multi-carrier energy markets introduced in the present work. This Section on numerical results is divided in two parts. 

In Section \ref{sec:numerical-results:conversion-to-elementary}, we describe numerical experiments showing how conversion (resp. storage) orders  enable conversion (resp. storage) technology owners to avoid market risks due to uncertainties on market prices of the origin and destination energy carrier markets of their conversion technology, or to avoid market risks due to uncertainties on market prices of a single energy carrier at different time instances (for storage technology owners). For this purpose, two cases are compared, where conversion (resp. storage) technology owners respectively have or do not have access to the conversion (resp. storage) orders to describe their trading preferences. When conversion (resp. storage) orders are not available, as it is the case in non-integrated energy carrier markets (or for storage orders in the e.g. current electricity markets), traders have to design a bidding strategy that relies on elementary orders and face the associated market risks.

Let us note that welfare objective values obtained as a result of market clearing with conversion and storage orders cannot be directly compared with welfare objective values obtained when conversion and storage technologies must make use of elementary orders in the absence of the adequate type of orders. The reason is that solutions obtained when markets are cleared with only elementary orders  may not be technically feasible for the conversion and storage technologies: this happens for example when a conversion technology has not bought enough gas via gas elementary orders, compared to the electricity to be delivered according to the acceptance of its electricity elementary orders. In that case, the outcome when only elementary orders are used is technically infeasible for the conversion technology. Moreover, as no procurement costs for gas are accounted for in relation to the revenues from the electricity sold, the overall profit of the conversion technology is a priori overestimated and `ad hoc' corrections of the total economic surpluses of all market participants are then needed to obtain figures comparable to welfares obtained when conversion (resp. storage) orders can be used, where technical feasibility is always ensured by the definition of conversion and storage orders. In view of the already clear deviation from an ideal competitive equilibrium outcome shown in Tables \ref{table:num-results:conv_to_elem_elec} \& \ref{table:num-results:storage_to_elem_elec} \, and the above remarks, we have opted not to numerically compare welfares in both cases (decoupled energy carrier markets versus integrated multi-carrier energy  markets).

In Section \ref{sec:numerical-results:generalstats}, we show how the proposed market clearing algorithms could be exploited to better understand the market evolutions due to an evolving energy landscape where new storage and conversion technologies are emerging and evaluating their value in the energy markets.

Ten realistic test cases are built. To make test cases as realistic as possible, historical data or relevant source was used to generate bids with relevant parameters as is explained in the following. 

To generate realistic elementary orders for day-ahead electricity markets, we use historical Italian orders corresponding to the bidding zone `Northern Italy' that are made available by the Italian Energy Exchange GME\footnote{https://www.mercatoelettrico.org/En/Default.aspx}. The historical bids are retrieved from the GME website using the tool provided by the authors of \cite{Savelli2018} to reproduce their numerical simulations, see in particular the associated public repository \cite{openDAM}.

To generate realistic gas orders, gas market data is retrieved from GME website (see \cite{GME}), and was used to generate random elementary gas orders with prices sampled from the publicly available market data. 

\textcolor{black}{For heat orders, three basic orders are assumed, as heat markets are currently not as developed as electricity and gas markets. Two demand orders with respectively volumes of 100MWh and 50MWh, and prices of 2000\euro/MWh and 1000\euro/MWh, have been considered. Besides this, there is a supply order with a volume of 200MWh and a price of 200\euro/MWh. The limit price of supply models a penalty in case of shortage not allowing conversion orders to provide all of the demanded heat, and the volume of the supply order guarentees that there is always enough heat supply. As long as conversion orders are available, as the origin carrier markets have substantially lower prices than the heat supply order, conversion to heat will be preferred by the welfare maximisation problem and this will also reflect in heat market prices that will then be much lower than the upper bound of 200 \euro/MWh.}

Conversion orders are generated using realistic (technology) parameter values for representative conversion technologies available in Italy, following the information that can be found for example in \cite{assessmentcogeneration2016}.

Finally, for storage orders, two large storage units are considered: at each period, for each unit, 500MWh can be stored or 400MWh injected back into the market. The initial storage level is 500MWh for the first storage unit (e.g. hydro-storage or a portfolio of storage devices), and 600MWh for the second storage unit. Losses when energy is stored is of 20\% for the first unit, and of 10\% for the second unit. Losses when energy is injected back into the market is of 30\% for the first unit, and of 20\% for the second unit. Finally, for both units, the cost of storing 1 MWh is 2 \euro, which is added to the price paid when buying the energy, modelling marginal storage costs or a minimum profit to recover.

\subsection{Elimination of market risk for conversion and storage technologies} \label{sec:numerical-results:conversion-to-elementary}

The objective in this section is first to \emph{illustrate} the market risks encountered by conversion technology owners that do not have access to  multi-carrier energy markets integrated via conversion orders. It shows the added value of the conversion orders which basically \emph{eliminate} risks associated with forecasting market price differences between the energy carrier markets. It will be shown that conversion technology owners cannot avoid losses or opportunity costs if they can only rely on elementary orders. Similar observations are made regarding the availability to storage technology owners of storage orders, illustrating elimination of risks associated with forecasting single carrier market prices at different time instances. In addition, possible technical infeasibilities due to the implementation of the market clearing result in the absence of storage orders for the storage technology owner are discussed.

For that purpose, we compare market outcomes (a) with conversion orders, (b) with conversion orders replaced by elementary orders used by conversion technology owners to bid in the separated carrier market setting. Similar comparisons are made regarding market outcomes (a) with storage orders, (b) with storage orders replaced by elementary orders used by storage technology owners to bid in markets without storage orders.

This necessitates to make some basic assumptions to model a \emph{basic} bidding behaviour of a conversion technology owner bidding for example in separated gas and heat markets, and similarly for a storage technology owner in (electricity) markets without storage orders. The basic bidding behaviour is sufficient to illustrate the benefits of the proposed integrated multi-carrier market clearing. Considering advanced trading strategies (e.g.based on stochastic optimisation) is beyond the scope of the present work. 

The following basic bidding behaviour is considered for conversion technology owners. It is assumed that a conversion technology owner makes use of forecasts of market prices in the origin and destination carrier market of its conversion technology. For example, a gas-fired power plant owner will forecast gas and electricity market prices for the following day, to assess whether the price difference between gas and electricity is financially attractive. The price difference should be sufficiently high, i.e., cover variable operations and maintenance costs, losses due to the imperfect conversion efficiency, plus some minimum marginal profit. However, the forecasts are imperfect. The perfect prices are the prices that would be obtained in the integrated energy carrier markets case, representing the competitive equilibrium prices that would be reached in a perfectly competitive environment. \textcolor{black}{Market forecasts are simulated by starting from those ideal market prices and adding a forecast error. 
The forecast errors are assumed to follow an uniform distribution between -5\% and +5\%.} If the forecasts show that the price difference is large enough for participants to make a positive profit, the participants submit a demand order in the origin carrier market with a price slightly higher (by 0.1 \euro/MWh) than the forecasted price (while still not offering too much), to try to be fully accepted. In the destination carrier market, knowing the highest price at which gas would have been bought (the limit price submitted), the limit sell price for electricity is chosen so as to guarantee that a non-negative profit is obtained from the buy and sell operations associated to the conversion (plus a small margin of 0.1\euro/MWh). However, it could happen that too much gas is bought in the origin market, and no electricity is finally accepted in the destination market, or the other way around. 

The following basic bidding behaviour, formed as a two-step procedure, is considered for storage technology owners. In the first step, we generate electricity prices in the presence of the original storage orders, and then add to those prices an error following a uniform distribution ranging from -5\% to 5\%, in order to represent the forecast prices. In the second step, we solve a profit maximisation problem of the storage against the forecast prices to generate the optimal charging/discharging energy volumes under the forecast prices. The forecast prices plus/minus a margin of 0.1 \euro/MWh, and the optimal charging/discharging quantities are used to create elementary orders.

Although the bidding behaviour of market participants has an impact on market prices, it is assumed that they do not have market power and that the impact of their bidding behaviour on market prices is negligible, i.e., they are price takers, as can be observed on Figure~\ref{fig:num-results:conv_to_elem_elec} below. The electricity and gas price dynamic is here not significantly impacted by the basic bidding behaviour described above, i.e. the translation of conversion orders to elementary orders.

\begin{figure}[ht!]
    \centering
    \includegraphics[scale=0.45]{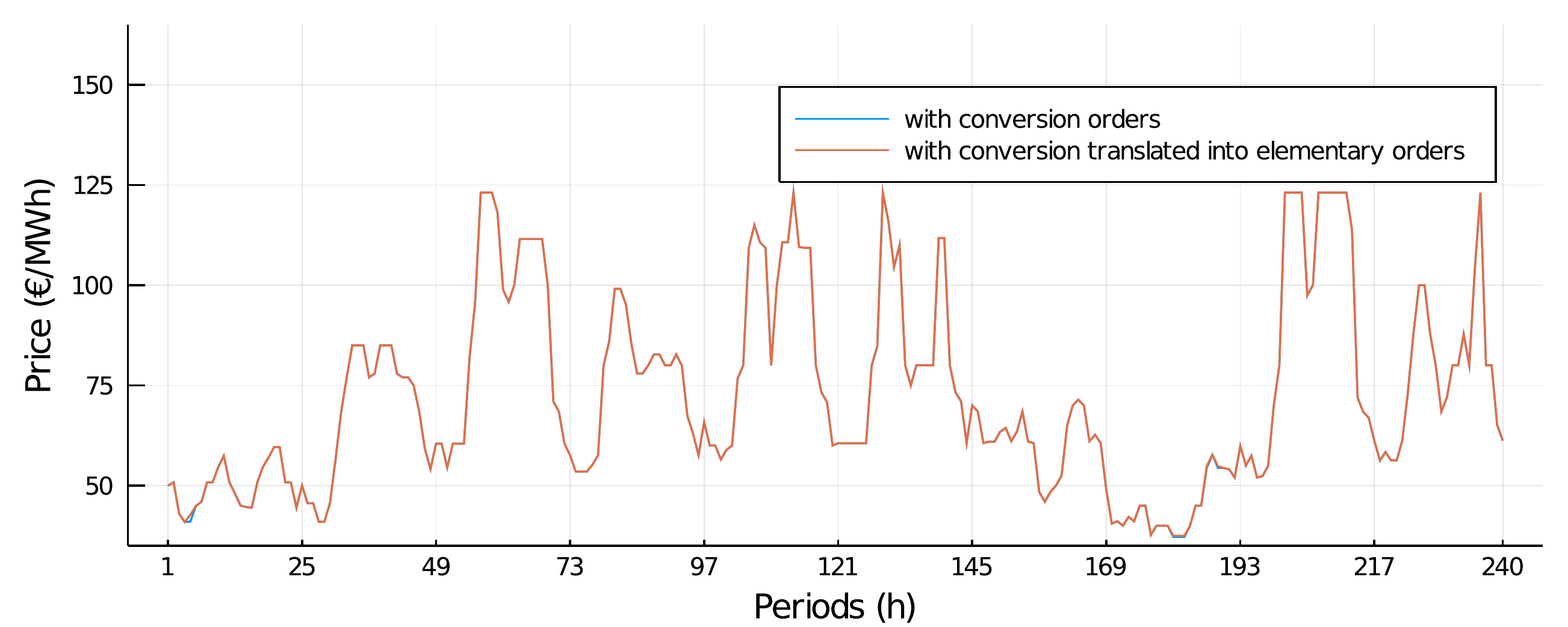}
    \caption{Impact on electricity price dynamic of translating conversion orders to elementary orders}
    \label{fig:num-results:conv_to_elem_elec}
\end{figure}
\begin{table}
\centering
\caption{Deviations from equilibrium when conversion technologies need to use elementary orders (no conversion orders). Conversion technologies face losses or opportunity costs.}
\label{table:num-results:conv_to_elem_elec}
 \begin{tabular}{||c|c|c|c||} 
 \hline \hline
  \textbf{Date} & \textbf{Losses (K\euro{})}  & \textbf{Opportunity Costs (K\euro{})}\\ [0.5ex] 
 \hline\hline
    2020-03-01 & 0.08  & 0.12 \\ 
\hline
    2020-03-02 & 5.09  & 5.20 \\ 
\hline
    2020-03-03 & 2.09  & 2.09 \\ 
\hline
    2020-03-04 & 0.00  & 0.00 \\ 
\hline
    2020-03-05 & 3.16  & 0.90 \\ 
\hline
    2020-03-06 & 4.55  & 4.62 \\ 
\hline
    2020-03-07 & 2.66  & 1.84 \\ 
\hline
    2020-03-08 & 6.29  & 6.34 \\ 
\hline
    2020-03-09 & 1.00  & 1.03 \\ 
\hline
    2020-03-10 & 6.53  & 6.61 \\
 \hline  \hline
\end{tabular}
\end{table}

As can be seen in Table \ref{table:num-results:conv_to_elem_elec}, conversion technologies having to use elementary orders face losses or opportunity costs. \textcolor{black}{Losses are caused by too much gas bought in the origin carrier market for which not enough electricity is sold in the destination market. Opportunity costs are incurred if the final price spread between the gas and electricity markets is attractive in terms of the conversion costs and efficiency, but using elementary orders to buy gas and sell electricity did not enable to obtain the best possible profits. Opportunity costs are computed as the maximum profit a conversion technology could have obtained given the price spread, minus the profits actually made. } 

\begin{figure}[ht!]
    \centering
    \includegraphics[scale=0.45]{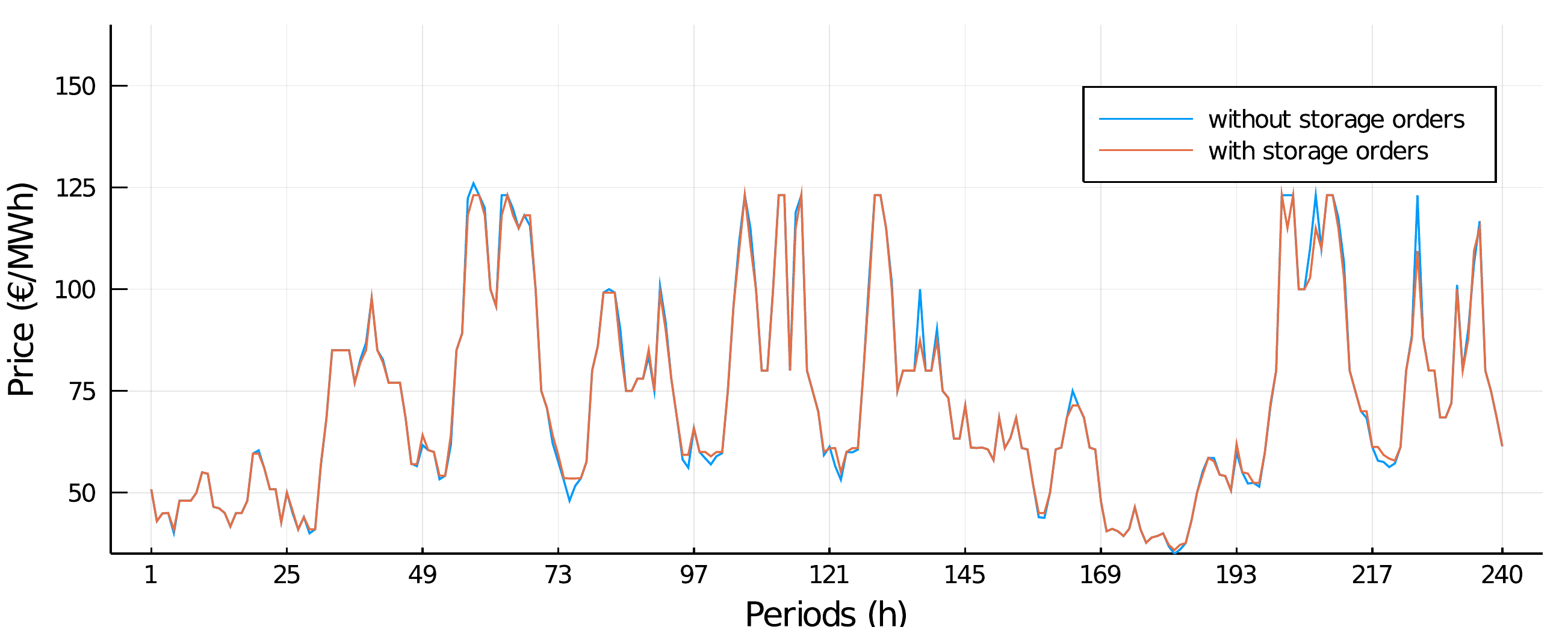}
    \caption{Comparison of electricity prices in the case without storage orders and the case with storage orders.}
    \label{fig:price_electricity_full_and_nots}
\end{figure}

Figure~\ref{fig:price_electricity_full_and_nots} compares electricity prices in the case with storage orders and the case where storage orders are replaced by elementary orders following the procedure described above. It is observed that storage orders contribute to reducing price spikes. In addition, we identify one drawback when there are no storage orders in the market, and the storage participates in the market via elementary orders. As shown in Figure~\ref{fig:energy_storage}, the energy storage level could become negative, i.e. it would have to deliver energy which is not available (stored).  This indicates the possible operational infeasibility of the storage if the initial energy level is too low, due to the fact that elementary orders are insufficient in accounting for the technical specifications of the storage, especially inter-temporal constraints. Note that this is an illustration of the current participation of storage technology in the markets, which would not necessarily take place in practice, as we only considered day-ahead markets, and did not consider a cutting-edge bidding strategy. In case of operational infeasibility, after the clearing of the day-ahead market, the storage owner still has a possibility e.g.  to trade on the intraday market or to be exposed to the imbalance prices, which is out of scope of the paper. 

\begin{figure}[ht!]
    \centering
    \includegraphics[scale=0.45]{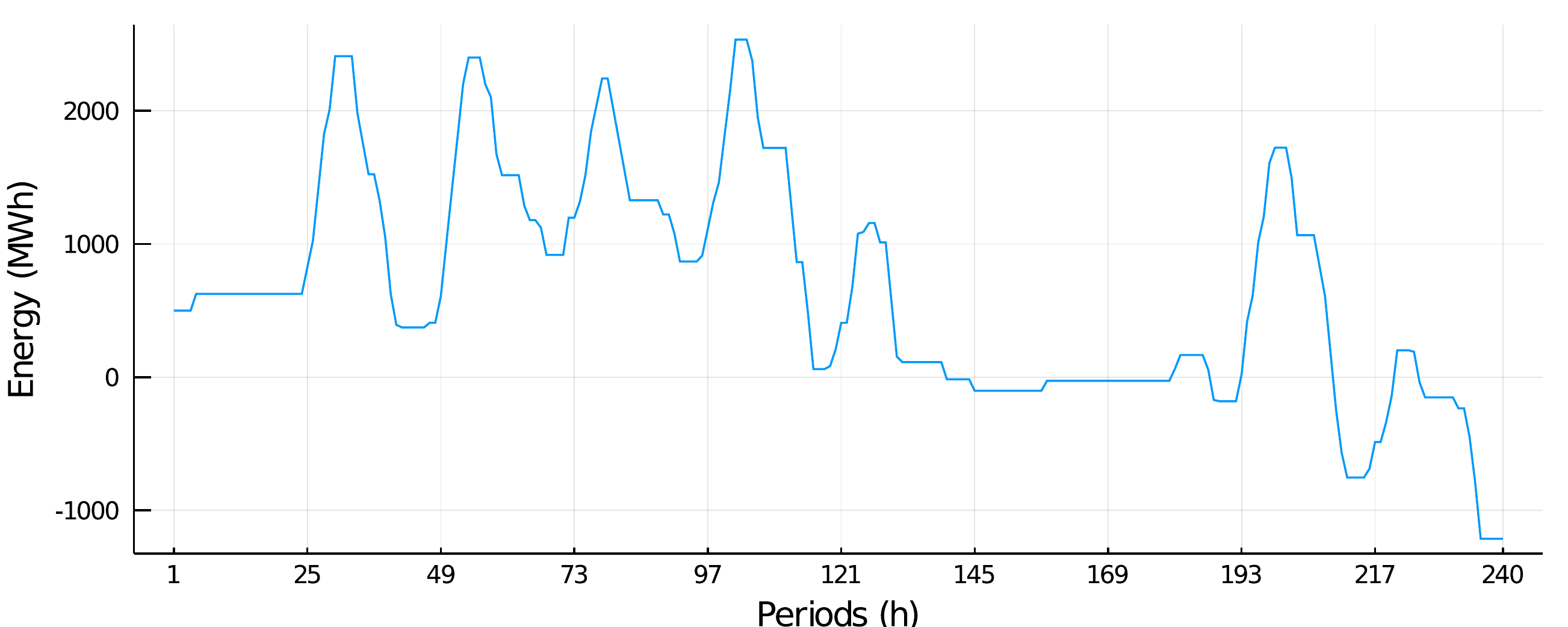}
    \caption{Energy dynamics of the storage when participating in the market via elementary orders.}
    \label{fig:energy_storage}
\end{figure}

\begin{table}
\centering
\caption{Deviations from equilibrium when storage technologies need to use elementary orders (no storage orders)}
\label{table:num-results:storage_to_elem_elec}
 \begin{tabular}{||c|c|c||} 
 \hline \hline
  \textbf{Date} & \textbf{Value Overbought Energy (K\euro{})}  & \textbf{Opportunity Costs (K\euro{})}\\ [0.5ex] 
 \hline\hline
    2020-03-01 & 11.4  & 5.26 \\ 
\hline
    2020-03-02 & 2.75  & 3.82 \\ 
\hline
    2020-03-03 & 37.29  & 1.27 \\ 
\hline
    2020-03-04 & -15.16  & 3.28 \\ 
\hline
    2020-03-05 & -53.79  & 2.48 \\ 
\hline
    2020-03-06 & -11.92  & 3.72 \\ 
\hline
    2020-03-07 & -0.17  & 6.78 \\ 
\hline
    2020-03-08 & -12.56  & 4.69 \\ 
\hline
    2020-03-09 & -32.19  & 3.16 \\ 
\hline
    2020-03-10 & -48.38  & 3.45 \\
 \hline  \hline
\end{tabular}
\end{table}

Table \ref{table:num-results:storage_to_elem_elec} shows the market imperfection if storage technologies have to participate in the market via elementary orders. The second column describes the  monetary value corresponding to the energy overbought until the end of the day. \textcolor{black}{To estimate the monetary value, we first calculate the difference of the energy level at the end of the day with the initial energy level of the same day to get the quantity of overbought energy, and then the price of the overbought energy is assumed to be the electricity market price in the last hour of the day. The product of the overbought energy quantity and electricity price yields the monetary value.}  The third column indicates the profit loss of the storage technology due to the lack the storage orders. \textcolor{black}{In order to calculate the profit loss,  we first calculate the profit of each day in the case study with storage orders and the case study without storage orders (the value of overbought energy is accounted for). Then the difference of the profits from two case studies are reported as the opportunity costs.} This table demonstrates that the storage can always yield a higher profit if he can participate in the market via storage orders.

\subsection{Impact of conversion and storage technologies on price formation in the integrated carrier markets} \label{sec:numerical-results:generalstats}

We now simulate the effect of adding new conversion and storage technologies in the market. Since the tools introduced above allow to reach a competitive equilibrium when clearing markets with conversion or storage orders, these order types can be used to represent the ideal market impact of conversion or storage technologies in a market under the assumption of perfect competition. These orders can hence be used to evaluate the market impacts of adding new technologies on the market clearing volumes and prices, and to calculate expected day-ahead market revenues for new technologies.

As expected, conversion orders tend to lower prices in the destination carrier market and to increase prices in the origin carrier market, see Figure \ref{fig:num-results:conv_elec}. Note that conversion orders could lead to a full price convergence between the different carrier markets: for that, it is sufficient to have (a) all conversion orders with a conversion efficiency of 100\%, (b) no conversion costs, and (c) a total conversion capacity which is `not scarce' in the sense that all the conversion capacity from a given origin carrier market to a given destination carrier market is not fully used in the market outcome. 

\begin{figure}[ht!]
    \centering
    \includegraphics[scale=0.45]{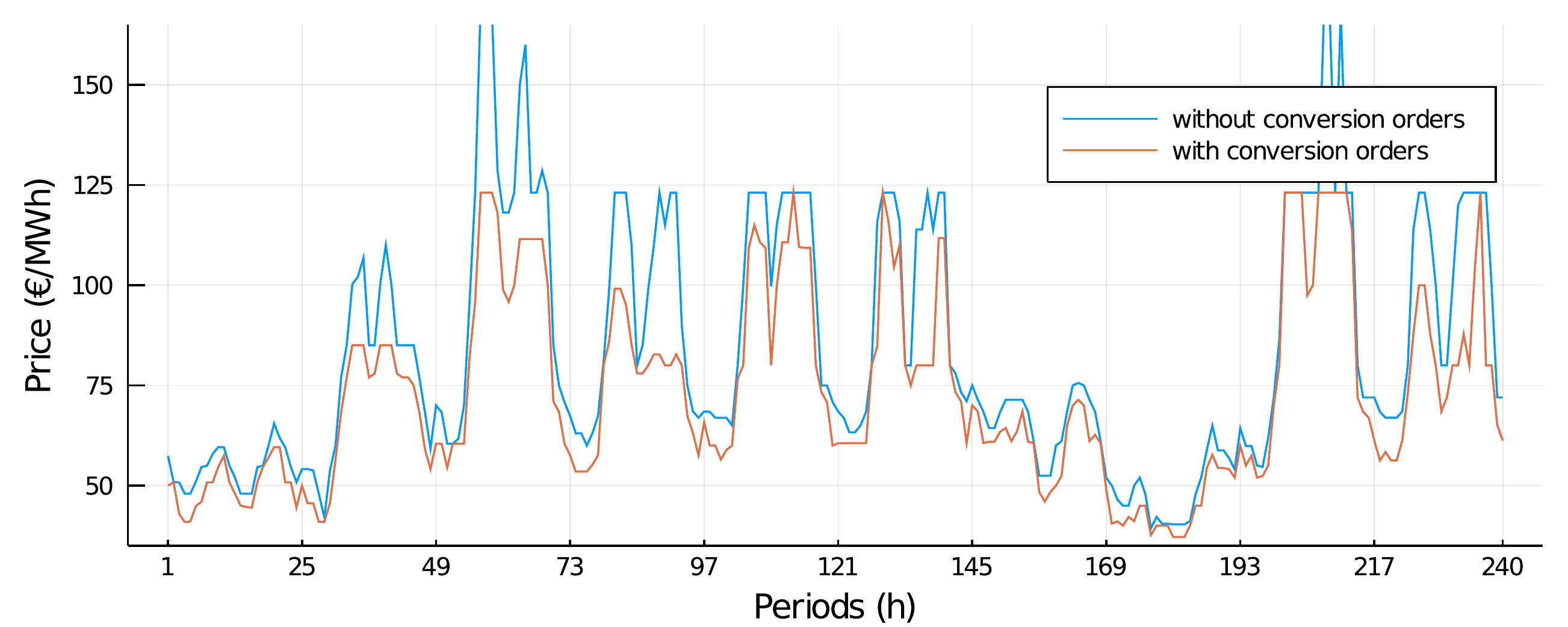}
    \caption{Impact of adding conversion orders on electricity market prices: conversion capacity from gas cheaper than electricity enables to lower electricity prices.}
    \label{fig:num-results:conv_elec}
\end{figure}

Additional storage orders on their side tend to shave electricity price peaks, as can be seen on Figure~\ref{fig:num-results:ts_elec}. The intuitive explanation is as follows: electricity is bought in periods when it is cheaper and sold at more expensive periods allowing to avoid the recourse to the most expensive peak units. The effect is to (substantially) lower the price peaks at periods of peak load, at the expense of a small price increase in periods where electricity is cheaper.

\begin{figure}[ht!]
    \centering
    \includegraphics[scale=0.45]{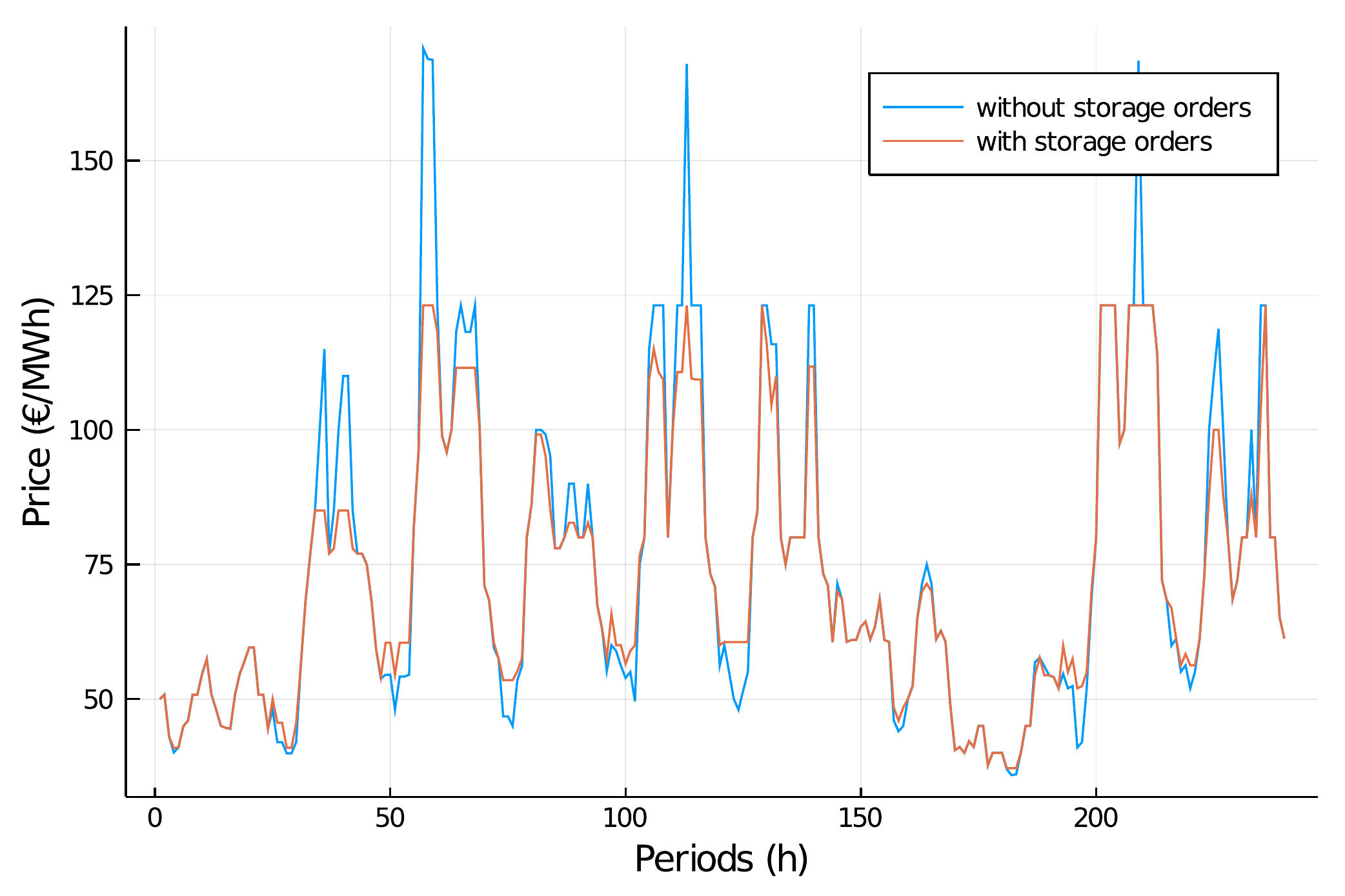}
    \caption{Smoothing effect of storage orders in electricity market prices.}
    \label{fig:num-results:ts_elec}
\end{figure}

\vspace{-0.5cm}

\section{Conclusion and Future Work}
\label{section:conclusion}

Sector coupling in general, and hence integration multi-energy-systems at the market level, are promising venues to cost efficient and sustainable energy systems of the future. As a result, there is a need for a well coordinated and explicit integration of energy carrier markets with the goal of gaining higher flexibility in the operation of underlying energy systems and improving the economic efficiency of the related energy markets. Toward this goal, we introduced integrated (day-ahead) multi carrier energy markets, featuring novel conversion and storage orders allowing market participant to adequately represent their conversion or storage technologies when trading energy in these markets. We have analytically and numerically shown how \textcolor{black}{such markets} alleviates market risk for traders for whom there is no more need to internalise market risk mitigating measures into their bidding strategies. Next to the novel order types, we have shown how novel constraint types such as pro-rata and cumulative constraints, can be used to better represent the techno-economic constraints of market participants.

In a second stage, we have shown that integrated day-ahead multi-carrier energy markets can be cleared in three different ways, each with different organisational structure. In a centralised integrated multi-carrier energy market, \textcolor{black}{there is a single market operator that has all the information about the orders and constraints and clears all the carrier markets together. Alternatively, two decentralised integrated day-ahead multi-carrier energy  markets are introduced.} They differ in the way decentralisation is performed and subsequently in the implications in terms of market operators that are involved and the information exchange among them. \textcolor{black}{One decentralised market clearing can be implemented by assigning a single market operator per energy carrier market. Another decentralised market clearing requires a  single  market  operator  per  carrier  market and an additional conversion order market operator.} The two decentralised market clearings also differ in the involved Lagrangian relaxations solved via a subgradient method, and consequently, the information that needs to be shared between the decomposed market operators. In both cases, the approach consists in solving the welfare maximisation program by solving a Lagrangian dual and recovering optimal primal solutions. In the decentralised market clearings, it is not possible to resolve price or volume indeterminancies, as opposed to the centralised market clearing case.

To conclude, which market clearing procedure is superior among the three largely depends on the  policy goals to be achieved. If the requirement is to preserve the current organisational structure of energy markets, with separate MOs for different carriers, then a decentralised market clearing should be implemented in practice, despite the trade-offs related to the additional information exchange between them. If the current market organisation should be perfectly preserved, the proposed decentralised market clearing MC2 should be implemented, even though \textcolor{black}{the market clearing price recovery is not straightforward. Then, if a slight change to the current structure, in the form of adding an additional market operator to deal with the conversion orders, is acceptable, then market clearing MC3 should be implemented, given that market clearing prices can be directly retrieved from the  information exchanged among the market operators.}
Similarly, if it is important to resolve the price and volume indeterminancies in the market outcome, or to reduce the information exchange volume, a centralised market clearing should be the pathway towards integration of multi-carrier energy markets.

In a sequel to this paper, the authors would like to leverage this work to quantitatively analyse interactions between storage and conversion technologies and how they benefit from each other in an ideal, integrated multi-carrier energy market. Integration of advanced bidding products with integer constraints also raise well-known pricing issues as they give rise to so-called markets with non-convexities where most of the time, a competitive equilibrium supported by uniform prices does not exist \cite{oneill2005, madani2018revisiting}: studying these pricing questions in the frame of multi-carrier energy markets is also an interesting venue for extension of the present work. \textcolor{black}{Another avenue for further research would be to investigate the splitting of conversion costs for conversion technologies with non-separable single input, multiple output energy carriers (e.g., CHPs with non-separable gas to heat and electricity conversion costs).} 

\vspace{-0.4cm}

\section*{Acknowledgement}
The work presented in this paper was carried out in  the MAGNITUDE project which has received funding from the European Union's Horizon 2020 research and innovation programme under grant agreement No 774309. This paper and the results described reflect only the authors' view. The European Commission and the Innovation and Networks Executive Agency (INEA) are not responsible for any use that may be made of the information they contain. 

\textcolor{black}{The authors would like to thank the member of MAGNITUDE project consortium for their support, valuable comments and feedback.}

\vspace{-0.4cm}
\section*{Declaration of competing interest}
\textcolor{black}{The authors declare that they have no known competing financial interests or personal relationships that could have appeared to influence the work reported in this paper.}

\vspace{-0.2cm}

\appendix
\renewcommand{\thesection}{\Alph{section}}

\section{Welfare maximization and competitive equilibrium: the general case}

We show here in the the context of abstract general orders (and constraints) how maximising welfare leads to a competitive equilibrium. This is a generalisation of the celebrated result by Paul Samuelson linking spatial price equilbria and linear programming used to maximize welfare \cite{samuelson}. Notation here follows O'Neill et al. in \cite{oneill2005} where the authors have generalised the results to a non-convex setting where non-convexities come from integer variables used to describe the techno-economic constraints of market participants. In what follows, we assume that the involved feasible sets are non-empty and bounded.

Compared to \cite{oneill2005}, in our more classic convex setting, we provide below a simple straightforward proof of  Theorem \ref{theorem:competitive-equilibrium} which directly follows from strong duality for linear programs.

In the following abstract welfare maximization problem, $x_k$ is the vector of decisions related to market participant $k$, constraints \eqref{generalmodel-balance} represent balance constraints (with optimal dual variables providing market prices), and constraints \eqref{generalmodel-individual-constraints} represent individual constraints per market participant $k$

\begin{align}
    \max_{x} &\sum_k c_k^T x_k, \label{generalmodel-objective}\\
 \text{s.t.}   & \sum_k A_{k} x_k  = 0, &  [\pi]\label{generalmodel-balance} \\
    & B_{k} x_k  \leq b_k  & \forall k  \label{generalmodel-individual-constraints} 
\end{align}

\vspace{-0.3cm}

\begin{definition}[Competitive (or Walrasian) equilibrium]
\label{definition:competitive-equilibrium}
A competitive equilibrium consists in decisions $(x_k^*)_{k \in K}$ and prices $\pi^*$ such that:
\begin{itemize}
    \item for each $k$, for the given fixed prices $\pi^*$,  $(x_k^*)$ solves 
    \begin{equation}
        \max_{x_k} c_k^T x_k  - (\pi^*)^T (A_{k} x_k) ,
        \label{WalrasianEQ_begin}
    \end{equation}
    subject to 
    \begin{align}
        & B_{k} x_k  \leq b_k, \label{WalrasianEQ_end}
    \end{align}
    \item The market clears: $\sum_k A_{k} x_k^* = 0$, cf. condition (\ref{generalmodel-balance}) above.
\end{itemize}
\end{definition}

\vspace{-0.4cm}

\begin{theorem}[Generalisation of the main result in \cite{samuelson} and special case without binary decisions of Theorem 2 in \cite{oneill2005}, with simplified proof]
\label{theorem:competitive-equilibrium}
Let us consider an optimal solution $(x_k^*)_{k \in K}$ to the welfare maximisation problem (\ref{generalmodel-objective})-(\ref{generalmodel-individual-constraints}). Let $\pi^*$ be obtained as (linear programming) optimal dual variables to the constraints \eqref{generalmodel-balance} in the linear optimisation problem \eqref{generalmodel-objective}-\eqref{generalmodel-individual-constraints}. The solution $(x_k^*)_{k \in K}$ and the prices $\pi^*$ form a competitive equilibrium as defined in Definition \ref{definition:competitive-equilibrium}.
\end{theorem}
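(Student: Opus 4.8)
The plan is to exploit the fact that \eqref{generalmodel-objective}--\eqref{generalmodel-individual-constraints} is a linear program whose only coupling across participants is through the balance constraints \eqref{generalmodel-balance}, and to derive per-participant optimality from strong LP duality applied to the Lagrangian relaxation of those constraints. The market-clearing half of Definition \ref{definition:competitive-equilibrium} is immediate: since $(x_k^*)$ is feasible for \eqref{generalmodel-objective}--\eqref{generalmodel-individual-constraints}, it satisfies $\sum_k A_k x_k^* = 0$ by construction. The whole content therefore lies in showing that, for the fixed prices $\pi^*$, each $x_k^*$ solves the individual program \eqref{WalrasianEQ_begin}--\eqref{WalrasianEQ_end}.

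First I would form the Lagrangian relaxation of the balance constraints, $\phi(\pi) = \max\{\sum_k c_k^T x_k - \pi^T \sum_k A_k x_k : B_k x_k \le b_k \ \forall k\}$, and observe that because the objective and the remaining constraints are fully separable across $k$, this relaxation splits as $\phi(\pi) = \sum_k v_k(\pi)$, where $v_k(\pi) := \max\{(c_k - A_k^T \pi)^T x_k : B_k x_k \le b_k\}$ is exactly the optimal value of the individual program \eqref{WalrasianEQ_begin}--\eqref{WalrasianEQ_end}. Next I would invoke strong duality for linear programs (using the standing assumption that the feasible sets are nonempty and bounded): since $\pi^*$ are optimal dual variables for the balance constraints, the dual optimum equals the primal optimum, i.e. $\phi(\pi^*) = \sum_k c_k^T x_k^*$.

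The decisive step is then a termwise-equality argument. Because $\sum_k A_k x_k^* = 0$, the dualized term vanishes at $x_k^*$, so $\sum_k c_k^T x_k^* = \sum_k (c_k - A_k^T \pi^*)^T x_k^*$, and combining with the strong-duality identity gives $\sum_k (c_k - A_k^T \pi^*)^T x_k^* = \sum_k v_k(\pi^*)$. On the other hand each $x_k^*$ is feasible for the $k$-th individual program, so $(c_k - A_k^T \pi^*)^T x_k^* \le v_k(\pi^*)$ for every $k$. A sum of quantities each bounded above by the corresponding $v_k(\pi^*)$ can equal $\sum_k v_k(\pi^*)$ only if every inequality is tight; hence $(c_k - A_k^T \pi^*)^T x_k^* = v_k(\pi^*)$ for all $k$, which is precisely the statement that $x_k^*$ is optimal for \eqref{WalrasianEQ_begin}--\eqref{WalrasianEQ_end}. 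This establishes the first bullet of Definition \ref{definition:competitive-equilibrium} and completes the proof.

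I expect the main obstacle to be bookkeeping rather than conceptual: keeping the sign convention of $\pi^*$ consistent between the balance constraint, the Lagrangian, and the individual objective $c_k^T x_k - (\pi^*)^T A_k x_k$, and justifying that strong duality (hence the value identity $\phi(\pi^*) = \sum_k c_k^T x_k^*$) genuinely holds here, which is where the nonemptiness and boundedness assumption is used. An equivalent route, which I would mention as an alternative, is to write the KKT conditions of the full LP and verify that the multipliers $(\pi^*, \mu_k^*)$ of the balance and individual constraints simultaneously satisfy the KKT conditions of each individual LP; sufficiency of KKT for these convex (indeed linear) subproblems then yields the same conclusion.
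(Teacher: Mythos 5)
Your proposal is correct and follows essentially the same route as the paper's own proof: dualize the balance constraints to form the partial Lagrangian dual, use separability across participants and strong LP duality to equate $\sum_k c_k^T x_k^*$ (with the dualized term vanishing by market clearing) to the sum of the individual optimal values, and conclude optimality of each $x_k^*$ from feasibility plus tightness of the summed inequalities. Your explicit termwise-equality argument is just a slightly more spelled-out version of the paper's "otherwise the right-hand side would be strictly greater, contradicting strong duality."
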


\begin{proof}
Balance constraints \eqref{generalmodel-balance} are directly satisfied by $(x_k^*)_{k \in K}$ solving (\ref{generalmodel-objective})-(\ref{generalmodel-individual-constraints}). It remains to show that $(x_k^*)_{k \in K}$ also solves \eqref{WalrasianEQ_begin}-\eqref{WalrasianEQ_end}.

This follows from the fact, proved below, that $\pi^*$ and $(x_k^*)_{k \in K}$ solve the following Lagrangian dual problem, or ``partial linear programming dual", of \eqref{generalmodel-objective}-\eqref{generalmodel-individual-constraints} where the balance  conditions \eqref{generalmodel-balance} are dualized using Lagrangian multipliers $\pi$:

\begin{equation} 
\min_{\pi} \Big\{ \max_x \sum_k c_k^T x_k-\pi^T(A_kx_k),\ s.t.\ B_kx_k \leq b_k \ \forall k\Big\} = \min_{\pi} \Big\{\sum_k \{\max_{x_k}   c_k^T x_k  - \pi^T (A_{k} x_k), ~\text{s.t.}\ B_{k} x_k  \leq b_k \}  \Big\}.\label{partial-dual}
\end{equation} 

The equality in \eqref{partial-dual} follows from the fact that the inner maximisation problem in the left-hand side can be separated per market participant $k$. It can then be seen that in the right-hand side, for the $\pi$ fixed, the inner problems are exactly \eqref{WalrasianEQ_begin}-\eqref{WalrasianEQ_end} written for each participant $k$, that must be solved by $(x_k^*)_{k \in K}$ to obtained the desired result.

Let us verify that $(x_k^*)_{k \in K}$ indeed solve the inner maximisation problems in \eqref{partial-dual}. By strong duality for linear programs (see  Theorem 1 in \cite{geoffrion1974lagrangean} and \cite{geoffrion1971duality}, Section 6.1 for more details on strong duality for such a 'partial dual'), $\pi^*$ solves the left-hand side of \eqref{partial-dual}, and:
\begin{equation} 
\sum_k c_k^T x_k^* =  \min_{\pi} \Big\{\max_{x} \sum_k  \Big(c_k^T x_k  - \pi^T (A_{k} x_k)\Big), ~\text{s.t.}\ B_{k} x_k  \leq b_k \ \forall k  \Big\}. \label{strong-duality-partial-dual}
\end{equation} 

Using \eqref{generalmodel-balance} multiplied by $\pi$ and the fact that $\pi^*$ solves \eqref{partial-dual}, we have:

\begin{equation} 
\sum_k c_k^T x_k^* -  (\pi^*)^T (A_{k} x_k) =  \sum_k \Big(\max_{x_k}   c_k^T x_k  - (\pi^*)^T (A_{k} x_k), ~\text{s.t.}\ B_{k} x_k  \leq b_k \Big). \label{strong-duality-partial-dual2}
\end{equation} 

Since for all $k$, $x_k^*$ satisfies $B_k x_k \leq b_k$, i.e. the $x_k^*$ are feasible for the profit maximisation problems in the right-hand side of \eqref{strong-duality-partial-dual2}, they must also be optimal solutions for these problems: otherwise, \eqref{strong-duality-partial-dual2} would not hold and the right-hand-side would be strictly greater than the left-hand side, contradicting strong duality for linear programs. \end{proof}

\bibliography{mybibfile}

\end{document}